\newcommand{\FF}{\mathbb{F}}
\newcommand{\ZZ}{\mathbb{Z}}
\newcommand{\kk}{\Bbbk}
\newcommand{\kv}{{\kk[V]}}
\newcommand{\kvg}{{\kk[V]^{G}}}
\def\sep{\operatorname{sep}}
\newcommand{\NGV}{{\mathcal{N}_{G,V}}}
\def\SL{\operatorname{SL}}
\def\reg{\operatorname{reg}}
\def\SL2{\operatorname{SL}_{2}(K)}
\def\GL2{\operatorname{GL}_{2}(K)}
\def\Ga{{\mathbb G}_{a}}
\def\INVSL2{$K[V]^{operatorname{SL}_{2}(K)}$}
\def\INVSO2{$K[V]^{operatorname{SO}_{2}(K)}$}
\def\INVGL2{$K[V]^{operatorname{GL}_{2}(K)}$}
\def\Hom{\operatorname{Hom}}
\def\GL{\operatorname{GL}}
\def\SL{\operatorname{SL}}
\def\id{\operatorname{id}}
\def\Stab{\operatorname{Stab}}
\def\Z{\mathbb{Z}}
\def\N{\mathbb{N}}
\newtheorem{Lemma}{Lemma}[section]
\newtheorem{Theorem}[Lemma]{Theorem}
\newtheorem{Corollary}[Lemma]{Corollary}
\newtheorem{Prop}[Lemma]{Proposition}
\newtheorem{Conj}[Lemma]{Conjecture}
\newtheorem*{Corollary of Conjecture}{Corollary of Conjecture}
\theoremstyle{definition}
\theoremstyle{remark}
  \newtheorem{rem}[Lemma]{Remark}
\newtheorem{eg}[Lemma]{Example}
\newtheoremstyle{Acknowledgments}
  {}
    {}
     {}
     {}
    {\bfseries}
    {}
     {.5em}
     {\thmname{#1}\thmnumber{ }\thmnote{ (#3)}}
\theoremstyle{Acknowledgments}
\newtheorem{ack}{Acknowledgments.}
\title[Zero-separating invariants]
{Zero-separating invariants\\ for linear algebraic groups}
\author{Jonathan Elmer}
\address{University of Aberdeen\\
King's College, Aberdeen\\
AB24 3UE}
\email{j.elmer@abdn.ac.uk}
\author{Martin Kohls}
\address{Technische Universit\"at M\"unchen \\
 Zentrum Mathematik-M11\\
Boltzmannstrasse 3\\
 85748 Garching, Germany}
\email{kohls@ma.tum.de}
\date{\today}
\subjclass[2010]{13A50}
\keywords{Invariant theory, linear algebraic groups, geometrically reductive, prime characteristic, global degree bounds}
\begin{document}
\maketitle

\begin{abstract}
Let $G$ be linear algebraic group over an algebraically closed field~$\kk$
acting rationally on a $G$-module $V$, and $\NGV$ its nullcone. Let $\delta(G,V)$ and~$\sigma(G,V)$ denote the minimal number $d$, such that for any
$v\in V^{G}\setminus\NGV$ and $v\in V\setminus\NGV$ respectively, there exists a
 homogeneous invariant $f$ of positive degree at most $d$ such that $f(v)\ne
0$. Then $\delta(G)$ and $\sigma(G)$ denote the supremum of these numbers taken over all $G$-modules $V$. For positive characteristics, we show that $\delta(G)=\infty$ for any subgroup
$G$ of $\GL_{2}(\kk)$ which contains an infinite unipotent group, and $\sigma(G)$
is finite if and only if $G$ is finite. In characteristic zero, $\delta(G)=1$
for any group $G$, and we show that if~$\sigma(G)$ is finite, then $G^{0}$ is
 unipotent. Our results also lead to a more elementary proof that
$\beta_{\sep}(G)$ is finite if and only if $G$ is finite.
\end{abstract}

\section{Introduction}\label{SecIntro}
In invariant theory, the notion of geometric reductivity is of great
importance. It implies finite generation of the invariants, the separabability
of disjoint orbit closures by invariants, and in characteristic zero even algebraic
properties like the Cohen-Macaulay\-ness of the invariant ring.
It is defined to be the property that any non-zero fixed point of a
finite dimensional rational representation can be separated from zero by a homogeneous invariant
of positive degree. Similarly, by definition any point outside the nullcone
can be separated from zero by a homogeneous positive degree invariant. It is a
natural question to ask what is the maximum degree needed for a given
representation. While in our recent paper \cite{ElmerKohlsSigmaDelta} we gave
some (partial) answers to these questions for the case of finite groups, the
current paper concentrates on the case of infinite groups. Before we go into
more details, we fix our setup.

Let $G$ be a linear algebraic group over an algebraically closed field $\kk$,
$V$ a finite dimensional rational representation of $G$ (which we will call a
$G$-module), and denote by $\kv\cong S(V^{*})$ the ring of polynomial
functions $V \rightarrow \kk$. The action of $G$ on $V$ induces an action of
$G$ on $\kv$ via $(g\cdot f)(v) := f(g^{-1}v)$ for $g\in G$,  $f\in\kv$ and
$v\in V$. The set of $G$-invariant polynomial functions under this action is
denoted by $\kvg$, and inherits a natural grading from $\kv$, since the given action is degree-preserving. We denote by
$\kv^{G}_{d}$ the set of polynomial invariants of degree $d$ and the zero-polynomial, and by
$\kv^{G}_{\leq d}$ the set of polynomial invariants of degree at most
$d$. For any subset $S$ of $\kv$, we define $S_{+}$ as the set of elements in $S$
with constant term zero. Then $\NGV:=\mathcal{V}(\kk[V]^{G}_{+})$ denotes the
nullcone of $V$.
A linear algebraic group is said to be geometrically reductive, if
for any
$G$-module $V$, we have $V^{G}\cap \NGV=\{0\}$, i.e. for all nonzero $v \in V^G$, there exists  $f \in
\kk[V]_{+}^G$ such that $f(v) \neq 0$. This inspires the definition
of a $\delta$-set: for a linear algebraic group $G$, let us say  a subset $S \subseteq \kvg$ is a
$\delta$-\emph{set} if, for all $v \in V^G\setminus \NGV$, there
exists an $f \in S_{+}$ such that $f(v) \neq 0$. We shall call a subalgebra of $\kk[V]^G$ a $\delta$-\emph{subalgebra} if it is a $\delta$-set. The quantity $\delta(G,V)$ is then defined as
\[
\delta(G,V) = \min\{d\ge 0|\quad \kk[V]^G_{\leq d} \, \text{ is a
  $\delta$-set} \,  \}.
\] 
Define further 
$$\delta(G):= \sup\{\delta(G,V) |\quad V \,\text{ a
   $G$-module}\},$$ where we take the supremum of an unbounded set to be infinity.
A reductive group is called linearly reductive if $\delta(G)=1$. Over a field of characteristic zero, Nagata and
Miyata \cite{NagataMiyata} have shown that reductive groups are
linearly reductive. In fact their proof shows that in characteristic zero,
for any linear algebraic group $G$ and any $G$-module $V$, $\delta(G,V)$ equals $1$ or
$0$ (the latter being the case when $V^{G}\subseteq \NGV$), see Proposition \ref{NagataMiyata}.
A natural, but seemingly neglected question, is: for which geometrically
reductive groups $G$ is $\delta(G)$ strictly greater than $1$, but still
finite? For finite groups, we gave the following answer in \cite{ElmerKohlsSigmaDelta}:
\begin{Theorem}[{\cite[Theorem 1.1]{ElmerKohlsSigmaDelta}}]\label{thmdeltag}
Let $G$ be a finite group, $\kk$ an algebraically closed field of characteristic $p$, and $P$ a Sylow-$p$-subgroup of $G$. Then $\delta(G) = |P|$.
\end{Theorem}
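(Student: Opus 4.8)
The plan is to establish the two inequalities $\delta(G)\le |P|$ and $\delta(G)\ge |P|$ separately, writing $p=\chr\kk$. For the upper bound the key tools are orbit products of linear forms together with the relative transfer $\operatorname{Tr}^G_P\colon \kk[V]^P\to\kvg$, $\operatorname{Tr}^G_P(h)=\sum_{gP\in G/P} g\cdot h$, which is well defined on $P$-invariants and preserves degree. Given any $G$-module $V$ and any $v\in V^G\setminus\NGV$ (in particular $v\neq 0$), I would pick a linear form $\ell\in V^*$ with $\ell(v)\neq 0$ and form the $P$-orbit product $N_P(\ell)=\prod_{\lambda\in P\cdot\ell}\lambda\in\kk[V]^P$, a $P$-invariant of degree $[P:\Stab_P(\ell)]$, which divides $|P|$. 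Since $v$ is fixed, $N_P(\ell)(v)=\ell(v)^{[P:\Stab_P(\ell)]}\neq 0$, and because every term of the transfer satisfies $(g\cdot N_P(\ell))(v)=N_P(\ell)(v)$, one gets $\operatorname{Tr}^G_P(N_P(\ell))(v)=[G:P]\,N_P(\ell)(v)$. As $[G:P]$ is coprime to $p$, this value is nonzero in $\kk$, producing a homogeneous $G$-invariant of positive degree at most $|P|$ not vanishing at $v$; hence $\delta(G)\le |P|$.

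For the lower bound I would exhibit one module on which degree $|P|$ is genuinely forced, namely the regular representation $V=\kk G$, with coordinate functions $x_g$ ($g\in G$) that $G$ permutes freely and transitively (left translation), and the distinguished fixed point $v=\sum_{g\in G}g$, at which every coordinate takes the value $1$. Because $G$ acts on the $x_g$ by permutations, $\kvg$ is spanned by the orbit sums $O(m)$ of monomials $m$, and evaluating at $v$ (where all $x_g=1$) gives $O(m)(v)=[G:\Stab_G(m)]$, the orbit size of $m$ read in $\kk$. Thus a homogeneous invariant of degree $d$ can separate $v$ from $0$ only if some degree-$d$ monomial has orbit size coprime to $p$.

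The crux is then a purely combinatorial observation. A monomial $m=\prod_g x_g^{a_g}$ has orbit size coprime to $p$ exactly when $\Stab_G(m)$ contains a Sylow $p$-subgroup, and $s\in\Stab_G(m)$ holds precisely when the exponent function $g\mapsto a_g$ (an element of $\N$) is invariant under left translation by $s$. Hence if $\Stab_G(m)$ contains a conjugate $P'$ of $P$, the support of $m$ is a union of $P'$-cosets, each of cardinality $|P|$, which forces $\deg m\ge |P|$. This rules out any separating invariant in degrees $1,\dots,|P|-1$, so every homogeneous invariant of degree $<|P|$ vanishes at $v$. In degree exactly $|P|$, the orbit sum of $m_0=\prod_{h\in P}x_h$ works: $\Stab_G(m_0)\supseteq P$, so its orbit size divides $[G:P]$ and is therefore nonzero in $\kk$, giving $O(m_0)(v)\neq 0$. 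Combining the two directions yields $\delta(G,\kk G)=|P|$, hence $\delta(G)\ge |P|$, and with the upper bound $\delta(G)=|P|$.

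The main obstacle I anticipate is the lower bound: one must force high degree for a Sylow subgroup of \emph{general} $G$, not merely for a $p$-group, and the transfer-based reduction used above does not obviously run in reverse. The regular representation resolves this because its essentially unique fixed point $\sum_g g$ interacts with the permutation action so that orbit sizes --- controlled entirely by $p$-Sylow stabilizers --- dictate which degrees can separate it. The two points demanding care are checking that $\sum_g g\notin\NGV$ (immediate, since $\prod_g x_g$ takes the value $1$ there) and verifying rigorously that no invariant of degree below $|P|$ survives, which is exactly the coset-support count above.
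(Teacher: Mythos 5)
Your proof is correct and follows essentially the same route as the authors': the present paper only quotes this theorem from \cite{ElmerKohlsSigmaDelta}, but your lower bound on the regular representation --- orbit sums of monomials, an orbit size coprime to $p$ forcing a Sylow subgroup inside the stabilizer, the coset-support count giving degree at least $|P|$, and the monomial $\prod_{h\in P}x_h$ achieving it --- is word for word the paper's own proof of Lemma~\ref{EpsilonFree} (the case $n=1$). The upper bound by applying the relative transfer $\operatorname{Tr}^G_P$ to a $P$-orbit product of a linear form not vanishing at the fixed point, with $[G:P]$ invertible in $\kk$, is likewise the standard argument of the cited source, so there is nothing to correct.
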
 
Thus, $\delta(G)$ is finite for all finite groups, and strictly greater than 1 if and only if $|G|$ is divisible by $p$. 
In this article we investigate $\delta(G)$ for infinite groups. In particular, we make and investigate the following conjecture:

\begin{Conj}\label{SsConj} 
Suppose $G$ is an infinite reductive group over a field of positive
characteristic whose connected component
$G^{0}$ is not a torus. Then $\delta(G) = \infty$.
\end{Conj}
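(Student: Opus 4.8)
The plan is to reduce to a rank-one subgroup and then transport the unboundedness of $\delta$ up to $G$. Because $G$ is reductive its unipotent radical is trivial; since this radical is connected it is contained in $G^{0}$ and equals the unipotent radical of $G^{0}$, so $G^{0}$ is reductive, and the hypothesis that $G^{0}$ is not a torus says precisely that $G^{0}$ has positive semisimple rank. Fixing a maximal torus $T\le G^{0}$ and a root $\alpha$, the subgroup $H:=\langle U_{\alpha},U_{-\alpha}\rangle$ is a closed connected semisimple group of rank one, hence isomorphic to $\SL_{2}(\kk)$ or $\operatorname{PGL}_{2}(\kk)$, and it contains the infinite unipotent root subgroup $U_{\alpha}\cong\Ga$. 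For $H\cong\SL_{2}(\kk)\le\GL_{2}(\kk)$ the $\GL_{2}$-result of this paper gives $\delta(H)=\infty$, and the $\operatorname{PGL}_{2}(\kk)$ case is expected to follow from the analogous construction; thus $G$ contains a closed reductive subgroup $H$ with $\delta(H)=\infty$, and everything reduces to passing from $H$ to $G$.

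For the transfer I would use the induced module $\Ind_{H}^{G}W=(\kk[G]\otimes W)^{H}$, a rational (hence locally finite) $G$-module whose invariants are, by Frobenius reciprocity, canonically $(\Ind_{H}^{G}W)^{G}\cong W^{H}$; under this isomorphism a $G$-fixed element $\tilde w$ is recovered from its value $\tilde w(e)\in W^{H}$ at the identity via the $H$-equivariant evaluation map $\mathrm{ev}_{e}$. Suppose $W$ is an $H$-module with $\delta(H,W)\ge d$, witnessed by some $w\in W^{H}\setminus\mathcal{N}_{H,W}$, and let $\tilde w\in(\Ind_{H}^{G}W)^{G}$ be the invariant corresponding to $w$, so $\mathrm{ev}_{e}(\tilde w)=w$. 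Choosing a finite-dimensional $G$-submodule $V\subseteq\Ind_{H}^{G}W$ with $\tilde w\in V$ (possible since the module is locally finite), the vector $\tilde w\in V^{G}$ is nonzero because $w\ne0$, so geometric reductivity of $G$ (Haboush's theorem) furnishes a positive-degree homogeneous $G$-invariant not vanishing at $\tilde w$; in particular $\tilde w\notin\NGV$, and $\tilde w$ genuinely constrains $\delta(G,V)$. The crux is the reverse estimate: if one had an $H$-equivariant linear map $s\colon W\to V$ with $s(w)=\tilde w$, then every homogeneous $F\in\kvg$ with $F(\tilde w)\ne0$ would pull back to $s^{*}F\in\kk[W]^{H}$ of the same degree with $(s^{*}F)(w)=F(\tilde w)\ne0$, forcing $\deg F\ge d$; this would give $\delta(G,V)\ge d$, and running $d\to\infty$ over a suitable family of $H$-modules $W$ would yield $\delta(G)=\infty$.

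The main obstacle is exactly the construction of the map $s$. The naive recipe --- extend $w\mapsto\tilde w$ from the trivial submodule $\kk w$ by zero across an $H$-invariant complement --- requires $\kk w$ to be an $H$-direct summand of $W$; but whenever such a complement exists the corresponding $H$-invariant projection $W\to\kk w$ is a degree-one invariant not vanishing at $w$, so $w$ would already be separated in degree one and could not witness $\delta(H,W)\ge d$ for $d\ge2$. In other words, the points $w$ responsible for the unboundedness of $\delta(H,\cdot)$ are precisely those for which the line $\kk w$ fails to split off, and this failure of complete reducibility --- the same failure of linear reductivity that makes $\delta(H)=\infty$ --- is exactly what blocks the transfer. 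A successful proof must therefore produce $s$ (or bound the separating degrees directly) using the particular geometry of $V\subseteq\Ind_{H}^{G}W$, rather than by abstract splitting; I expect this to require replacing the formal transfer by an explicit construction of $G$-modules and non-trivial $G$-fixed points adapted to the embedding $H\hookrightarrow G$, generalizing the families used for subgroups of $\GL_{2}(\kk)$. Carrying this out uniformly for all reductive $G$ with $G^{0}$ non-toral --- notably for simple groups of rank at least two, where rank one is reached only through subgroups and not through quotients --- is what keeps the statement a conjecture rather than a theorem.
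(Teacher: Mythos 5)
First, a point of order: the statement you were asked to prove is Conjecture~\ref{SsConj}, and the paper itself supplies \emph{no} proof of it --- the introduction says explicitly ``While we do not succeed in proving our conjecture, we are able to establish'' Theorem~\ref{DeltaGL2}. So there is no proof to compare against, and your text, which by its own closing admission stops short of constructing the key map $s$, is likewise a program rather than a proof. Within that program, your reduction is sound: $G^{0}$ is reductive and non-toral, so a rank-one subgroup $H=\langle U_{\alpha},U_{-\alpha}\rangle\cong \SL_{2}(\kk)$ or $\operatorname{PGL}_{2}(\kk)$ exists, and $\delta(H)=\infty$ in both cases --- indeed the $\operatorname{PGL}_{2}$ case is not merely ``expected'': the example at the end of Section~3 derives $\delta(\operatorname{PGL}_{2}(\kk))=\infty$ from Theorem~\ref{DeltaGL2} together with Proposition~\ref{DeltaNormalSubgroupFiniteIndex}, taking $N=Z(\GL_{2}(\kk))$ a torus. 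The genuine gap is exactly where you place it: $\delta$ is monotone along quotients (Remark~\ref{DeltaQuotient}) but not, so far as is known, along closed subgroups, and your Frobenius-reciprocity mechanism needs an $H$-equivariant linear $s\colon W\to V\subseteq \Ind_{H}^{G}W$ through the witness $w$; your observation that an abstract splitting of $\kk w$ off $W$ would yield a degree-one $H$-invariant nonvanishing at $w$, contradicting $\epsilon(H,w)\ge 2$, is correct and shows the naive construction can never work at precisely the points that matter.

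It is worth noting how closely your diagnosis matches the paper's own partial strategy. The rigorous substitute for subgroup-transfer there is Lemma~\ref{RelativeEpsilon}: $\delta(G,V)\ge \epsilon(H,V)$ for any subgroup $H$, provided $V^{G}\setminus\NGV\ne\emptyset$. The paper then takes $H$ to be the \emph{finite} unipotent subgroups $U_{n}\cong(\FF_{p^{n}},+)$ and the modules $V_{n}=\Hom_{\kk}(S^{p^{n}-1}(V),S^{p^{n}-1}(V))$, whose restriction to $U_{n}$ is free (Proposition~\ref{ProjectiveModules}), so that $\epsilon(U_{n},V_{n})=p^{n}$ by Corollary~\ref{EpsilonProjective}, while the determinant certifies $\id\in V_{n}^{G}\setminus\mathcal{N}_{G,V_{n}}$. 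This is precisely the ``explicit construction adapted to the embedding'' your last paragraph calls for --- but it is carried out only inside $\GL_{2}(\kk)$, because the low-degree invariant $\det$ witnessing that the fixed point lies outside the nullcone is special to the natural module. So the missing idea, in your note as in the paper, is a family of $G$-modules for general reductive $G$ possessing simultaneously (i) a nonzero $G$-fixed point outside the nullcone (Haboush guarantees one abstractly, with no degree control of the kind you would need on the \emph{lower}-bound side) and (ii) free or projective restriction to arbitrarily large finite unipotent subgroups. Your proposal contains no false step, but it proves nothing beyond what the paper already establishes; the statement remains a conjecture.
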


The assumption on $G^{0}$ cannot be dropped, as the following
result, a generalisation of Theorem~\ref{thmdeltag} shows:

 \begin{Theorem}\label{deltaTorus}
 Let $G$ be a reductive linear algebraic group over a field  of
 characteristic $p > 0$ such that $G^0$ is a torus or trivial. Let $P$ be a
 Sylow-$p$-subgroup of the (finite) group $G/G^0$. Then $\delta(G)=|P|$.
\end{Theorem}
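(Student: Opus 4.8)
The plan is to reduce everything to the finite quotient $\Gamma := G/G^{0}$ and then invoke Theorem~\ref{thmdeltag}. Write $T = G^{0}$ (a torus or trivial) and $\pi\colon G \to \Gamma$ for the quotient map, so that $P$ is a Sylow-$p$-subgroup of the finite group $\Gamma$ and $|\Gamma| = |P|\cdot m$ with $p\nmid m$. The whole argument rests on the fact that $T$ is linearly reductive in \emph{every} characteristic.

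For the upper bound $\delta(G)\le |P|$, I would exploit the $T$-weight decomposition $V = \bigoplus_{\chi} V_{\chi}$. Since $T$ is normal in $G$, the group $G$ permutes the weight spaces through its conjugation action on the character group $X(T)$, and the trivial character is fixed; hence $V^{T}=V_{0}$ is a $G$-submodule and $V = V^{T}\oplus\bigl(\bigoplus_{\chi\ne 0}V_{\chi}\bigr)$ is a decomposition of $G$-modules. As $T$ acts trivially on $V^{T}$, the latter is genuinely a $\Gamma$-module, and pulling back along the $G$-equivariant projection $V\twoheadrightarrow V^{T}$ embeds $\kk[V^{T}]^{\Gamma}=\kk[V^{T}]^{G}$ into $\kvg$ degree by degree. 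Now take $v\in V^{G}\setminus\NGV$; then $v\in V^{T}$ and $v\ne 0$ (because $0\in\NGV$). A short integrality argument shows the nullcone of a finite group is trivial: each coordinate function $x_{i}$ on $V^{T}$ satisfies the monic equation $\prod_{\gamma\in\Gamma}(t-\gamma\cdot x_{i})=0$ whose lower coefficients are the elementary symmetric functions of the $\Gamma$-orbit of $x_{i}$, all lying in $\kk[V^{T}]^{\Gamma}_{+}$, so every point of $\mathcal{N}_{\Gamma,V^{T}}$ has all coordinates zero, i.e. $\mathcal{N}_{\Gamma,V^{T}}=\{0\}$. Thus $v\notin\mathcal{N}_{\Gamma,V^{T}}$, and Theorem~\ref{thmdeltag} applied to $\Gamma$ produces a homogeneous $\Gamma$-invariant $\bar f$ on $V^{T}$ of degree at most $|P|$ with $\bar f(v)\ne 0$; its pullback is the required $G$-invariant of degree $\le |P|$ separating $v$ from the origin.

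For the lower bound $\delta(G)\ge|P|$, I would inflate modules from $\Gamma$ to $G$. For any $\Gamma$-module $W$, let $\hat W$ be the $G$-module obtained through $\pi$ (so $T$ acts trivially). Then $\kk[\hat W]^{G}=\kk[W]^{\Gamma}$, $\hat W^{G}=W^{\Gamma}$, and $\mathcal{N}_{G,\hat W}=\mathcal{N}_{\Gamma,W}$, so the conditions defining a $\delta$-set coincide and $\delta(G,\hat W)=\delta(\Gamma,W)$. Taking the supremum over all $W$ and again using Theorem~\ref{thmdeltag} gives $\delta(G)\ge\sup_{W}\delta(\Gamma,W)=\delta(\Gamma)=|P|$. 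Combined with the upper bound this yields $\delta(G)=|P|$.

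The substantive point — and the only place the torus really enters — is the first step of the upper bound: linear reductivity of $T$ makes $V^{T}$ a $G$-module direct summand on which the torus is invisible, collapsing the problem onto the already-settled finite quotient. I do not expect a genuine obstacle here; the care required is chiefly in verifying the $G$-module splitting $V=V^{T}\oplus\bigoplus_{\chi\ne 0}V_{\chi}$ and that the pullback of invariants preserves both degree and the separation property.
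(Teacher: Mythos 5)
Your proof is correct, but your upper bound takes a genuinely different route from the paper's. The paper derives $\delta(G)\le\delta(G^0)\delta(G/G^0)$ from its general Proposition~\ref{DeltaNormalSubgroupFiniteIndex} (a Nagata-style argument requiring only that the quotient be reductive), uses linear reductivity of the torus to get $\delta(G^0)=1$, and concludes $\delta(G)=\delta(G/G^0)=|P|$ via Theorem~\ref{thmdeltag}; your lower bound by inflation is exactly its Remark~\ref{DeltaQuotient}. Your upper bound is instead direct: the $T$-weight decomposition gives a $G$-module splitting $V=V^T\oplus\bigoplus_{\chi\ne 0}V_\chi$ (valid, since $G$ normalizes $T$ and so permutes weight spaces, fixing the zero weight, and tori are linearly reductive in all characteristics), pulling back along the equivariant projection reduces each module to the finite quotient $\Gamma=G/G^0$ acting on $V^T$, your integrality argument correctly shows $\mathcal{N}_{\Gamma,V^T}=\{0\}$, and Theorem~\ref{thmdeltag} supplies the separating invariant of degree at most $|P|$ (note $v\in V^G$ maps to a $\Gamma$-fixed point of $V^T$, as Theorem~\ref{thmdeltag} requires). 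In effect you have unwound the paper's Proposition~\ref{DeltaNormalSubgroupFiniteIndex} in the special case where the normal subgroup is linearly reductive: there $d=1$, the space $U=\kk[V]^N_1$ is $(V^T)^*$, and the abstract construction with the functional $\varphi$ and geometric reductivity of the quotient collapses to your concrete projection onto $V^T$. Your route is more elementary and self-contained for this theorem, avoiding geometric reductivity of $G/N$ altogether and yielding the slightly sharper per-module bound $\delta(G,V)\le\delta(\Gamma,V^T)$; the paper's route buys a general product inequality $\delta(G,V)\le\delta(N,V)\delta(G/N)$ that it reuses elsewhere (e.g.\ in the discussion following Theorem~\ref{DeltaGL2}).
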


While we do not succeed in proving our conjecture, we are able to establish:

\begin{Theorem}\label{DeltaGL2} 
Let $\kk$ be a field of characteristic $p>0$. Suppose $G$ is a closed subgroup of
$\GL_2(\kk)$ containing an infinite unipotent subgroup. Then $\delta(G) = \infty$.
\end{Theorem}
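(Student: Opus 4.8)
The plan is to put $G$ into normal form and then split according to the position of $G$ inside $\GL_2(\kk)$. Since $\delta$ is invariant under conjugation, I would first assume that $G$ contains the standard one-parameter unipotent subgroup $U=\{u_t\}\cong\Ga$. As $U$ is connected we have $U\subseteq G^0$, and inspecting the closed connected subgroups of $\GL_2(\kk)$ that contain $\Ga$ gives a dichotomy: either (i) $G$ normalises $U$, so that $G$ is solvable and lies in the Borel subgroup $B=U\rtimes D$ with $D$ a subgroup of the diagonal torus, or (ii) $G$ does not normalise $U$, whence a second, distinct unipotent subgroup $gUg^{-1}\subseteq\SL_2(\kk)$ lies in $G$, and two distinct maximal unipotents generate $\SL_2(\kk)$, so $G\supseteq\SL_2(\kk)$. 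In each case I would produce, for every $d$, a $G$-module $V$ with a fixed vector $v\in V^G\setminus\NGV$ whose smallest separating invariant has degree exceeding $d$.

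For case (ii) the essential point is that $\SL_2(\kk)$, being geometrically reductive, separates every nonzero fixed vector from $0$; hence any nonzero $v\in V^G$ lies automatically outside $\NGV$, and the only task is to force the minimal separating degree upward. I would do this with modules possessing a trivial submodule that is \emph{not} a direct summand: using the nonvanishing of $H^1(\SL_2(\kk),L(\lambda))$ for a suitable infinite family of dominant weights $\lambda=\lambda_i\to\infty$ (which exists in positive characteristic), one obtains non-split extensions $0\to\kk\to E_i\to L(\lambda_i)\to 0$. The trivial socle vector $v_i\in E_i^{\SL_2}$ is detected by no linear invariant, the extension being non-split, while a degree count tied to the Frobenius level of $\lambda_i$ shows that the least degree of an invariant nonzero at $v_i$ grows with $i$. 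To pass from $\SL_2(\kk)$ to a general $G\supseteq\SL_2(\kk)$ I would use that $\SL_2(\kk)$ is normal in $\GL_2(\kk)$ with torus quotient, so the $E_i$ extend to $G$ after twisting by a power of the determinant that makes $v_i$ genuinely $G$-fixed; the $G$-invariants are then exactly the $\SL_2$-invariants of the correct determinant weight, which can only raise the required degree.

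Case (i) needs a different mechanism, since $B$ is not reductive and a fixed vector may lie in the nullcone. Here I would exploit the torus: a $G$-invariant is precisely a $U$-invariant of $D$-weight zero. A $G$-fixed $v\notin\NGV$ lies in particular outside the $U$-nullcone, so some linear $U$-invariant $\ell$ is nonzero at $v$; the obstruction to separating $v$ by an honest $G$-invariant is that $\ell$ carries a nonzero $D$-weight and must be assembled into a weight-zero expression. Frobenius twists of the natural module produce $U$-invariants whose $D$-weights are arbitrarily large powers $p^{r}$, and by combining twists of opposite sign in a single module I would arrange a $U$-fixed, weight-zero vector $v$ for which every weight-zero $U$-invariant nonzero at $v$ is built from these high-weight pieces, hence has degree at least $p^{r}$; letting $r\to\infty$ gives $\delta(G,V)\to\infty$.

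The main obstacle, in both cases, is the two-sided control of the invariants at $v$: I must show simultaneously that $v\notin\NGV$ (some invariant, possibly of high degree, is nonzero at $v$) and that every invariant of degree $\le d$ vanishes at $v$. In case (ii) the first half is automatic from geometric reductivity, so the work is the lower bound on the minimal separating degree, where the cohomological input and the Frobenius structure of $\SL_2(\kk)$-modules enter. In case (i) both halves demand a genuine computation of the low-degree, weight-zero part of the $U$-invariant ring on the twisted modules, and ruling out an unexpected low-degree weight-zero invariant that detects $v$ is the delicate step where I expect the proof to require the most care.
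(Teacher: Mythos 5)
Your proposal has two genuine gaps, and the first is fatal where the theorem is hardest. In case (i) you assert that a $G$-fixed $v\notin\NGV$, being outside the $U$-nullcone, is detected by a \emph{linear} $U$-invariant $\ell$. This is false in characteristic $p$: it would say $\delta(\Ga,V)\le 1$ for every $\Ga$-module, which is precisely what the theorem denies. The paper's own $3$-dimensional example makes this concrete: for the $\Ga$-module $V_n=\kk^3$ with invariant ring $\kk[x_0,\,x_2x_0^{p^n-1}-x_1^{p^n}]$, the fixed point $v=(0,1,0)$ lies outside the nullcone but is killed by the unique linear invariant $x_0$, and the minimal detecting degree is $p^n$. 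Worse, your case (i) includes $G=U\cong\Ga$ itself (the Borel with $D$ trivial), where the entire torus-weight mechanism is vacuous: there is no weight obstruction to exploit, and your proposal offers no other source of growing degree lower bounds for unipotent groups -- yet $\delta(\Ga)=\infty$ is the essential core of the theorem. (Even the weight bookkeeping is off: pairing $D$-weights $+p^r$ and $-p^r$ admits the degree-two weight-zero invariant $\ell_1\ell_2$; you would need incommensurate weights such as $1$ and $-p^r$.) In case (ii), geometric reductivity does give $v_i\notin\NGV$ for free, but the non-splitness of $0\to\kk\to E_i\to L(\lambda_i)\to 0$ only excludes invariants of degree one; the claimed ``degree count tied to the Frobenius level'' showing the minimal detecting degree grows with $i$ is the entire quantitative content of the theorem, and it is asserted rather than proved.

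The paper avoids both difficulties with a single uniform construction, with no case split on the position of $G$ in $\GL_2(\kk)$. After conjugating so that $G$ (closed!) contains the full one-parameter group $U=\{u_t\}$, it takes $V_n=\Hom_\kk(S^{p^n-1}(V),S^{p^n-1}(V))$ for $V$ the natural module; the point $v=\id$ is $G$-fixed and $\det(\id)=1$ shows $v\notin\mathcal{N}_{G,V_n}$, settling existence instantly. The lower bound comes not from cohomology or torus weights but from restriction to the \emph{finite} $p$-subgroups $U_n=\{u_t\mid t\in\FF_{p^n}\}$: a Vandermonde determinant shows $S^{p^n-1}(V)$ restricts to the regular representation of $U_n$, hence $V_n$ is free over $U_n$, and an orbit-sum argument on permutation bases shows that on a free module every invariant detecting a nonzero fixed point has degree exactly $|U_n|=p^n$, whence $\delta(G,V_n)\ge\epsilon(U_n,V_n)=p^n$. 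This finite-subgroup mechanism is exactly the missing ingredient your sketch needs in both of your cases; without it, or a worked-out substitute (e.g.\ an actual computation of low-degree invariants on your extensions $E_i$ and twisted modules), the proposal does not constitute a proof.
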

 
In particular, $\delta(\SL_2(\kk))=\delta(\GL_2(\kk))=\delta(\Ga) = \infty$
(where $\Ga=(\kk,+)$ is the additive group of the ground field) in positive characteristics, supporting the
conjecture. 

In addition to $\delta(G)$, we study the closely related quantity $\sigma(G)$.  We shall say a subset $S \subseteq \kk[V]^G$ is a $\sigma$-\emph{set} if, for all $v \in V \setminus
\mathcal{N}_{G,V}$, there exists an $f\in S_{+}$  such that $f(v) \neq 0$. 
We shall call a subalgebra of $\kk[V]^G$ a $\sigma$-\emph{subalgebra} if it is
a $\sigma$-set. Then the quantities $\sigma(G,V)$ and $\sigma(G)$ are defined
along the same lines as $\delta(G,V)$ and $\delta(G)$. For a motivation of the
importance of this number we content ourselves here by saying that at least for
linearly reductive groups in characteristic zero, the knowledge of
$\sigma(G,V)$ gives upper bounds for the maximal degrees of generating sets
(for example in Derksen's famous bound \cite{DerksenPolynomial}), and refer
the reader to \cite{DomokosCziszter} and  \cite{ElmerKohlsSigmaDelta} for
more details and some elementary properties of this number.

In the latter paper, the authors investigated $\sigma(G)$ for finite groups
$G$, mainly for positive characteristic. In sections 4 and 5 of the present article we investigate $\sigma(G)$ for infinite linear algebraic groups. Our main results are as follows:

\begin{Theorem}\label{SigmaCharP} 
Let $G$ be a linear algebraic group over a field of characteristic $p > 0$. Then $\sigma(G)$ is finite if and only if $|G|$ is finite.
\end{Theorem}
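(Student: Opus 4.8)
The plan is to prove the two implications separately; the direction $|G|<\infty \Rightarrow \sigma(G)<\infty$ is elementary, while $|G|=\infty \Rightarrow \sigma(G)=\infty$ carries the real content. For the finite case I would first note that $\NGV=\{0\}$ for every $G$-module $V$: given $0\ne v\in V$, the orbit $Gv$ is finite and avoids $0$, so a sufficiently general linear form $\ell$ is nonzero on all of $Gv$, and then the norm $N(\ell):=\prod_{g\in G}g\cdot\ell$ is a homogeneous invariant of degree $|G|$ with $N(\ell)(v)=\prod_g\ell(g^{-1}v)\ne 0$. Hence $\sigma(G,V)\le|G|$ for every $V$, so $\sigma(G)\le|G|<\infty$.

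For the converse I would first record two monotonicity facts and a sandwich. Since every $\sigma$-set is a $\delta$-set, and since a separating set of degree $\le\bsep(G)$ separates any $v\notin\NGV$ from $0\in\NGV$, we get $\delta(G)\le\sigma(G)\le\bsep(G)$; in particular proving ``$G$ infinite $\Rightarrow\sigma(G)=\infty$'' simultaneously reproves ``$G$ infinite $\Rightarrow\bsep(G)=\infty$'', and it lets me import $\sigma(\Ga)=\infty$ from $\delta(\Ga)=\infty$ (Theorem \ref{DeltaGL2}). Moreover $\sigma$ is monotone under quotients: if $N\trianglelefteq G$ is closed, then every $G/N$-module is a $G$-module on which $N$ acts trivially, with identical invariant ring and nullcone, so $\sigma(G)\ge\sigma(G/N)$. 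Now assume $G$ is infinite, so $G^0$ is positive-dimensional and is therefore either a nontrivial torus or contains a closed copy of $\Ga$. If $G$ has a character onto $\Gm$ or a surjection onto $\Ga$, I am done by quotient-monotonicity together with $\sigma(\Ga)=\infty$ and $\sigma(\Gm)=\infty$, the latter witnessed by the $\Gm$-representation of weights $1,-n$, whose invariants are $\kk[x^ny]$ and whose nullcone is missed by a point with both coordinates nonzero, giving $\sigma(\Gm,V)=n+1$. The essential remaining cases are those admitting no such character, for instance $G=\Gm\rtimes\ZZ/2$ (inversion) or $\SL_2$.

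For the core case I would use a \emph{two-scale} construction, modelled on $G=\Gm\rtimes\ZZ/2$. Fix a nontrivial $\lambda\colon\Gm\to G$ and take two $G$-modules on which $\lambda$ acts with weights of very different magnitude; for $O_2$ these are the irreducibles $\rho_1,\rho_n$ of $\Gm$-weights $\pm1$ and $\pm n$, giving $V=\rho_1\oplus\rho_n$ with coordinates $x_1,x_2,y_1,y_2$ of $\lambda$-weights $-1,+1,-n,+n$ and reflection interchanging $x_1\leftrightarrow x_2$, $y_1\leftrightarrow y_2$. Then $x_1^n y_2+x_2^n y_1$ is a genuine $G$-invariant of degree $n+1$, nonzero at $v=(1,0,0,1)$, so $v\notin\NGV$. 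The point is that any monomial nonzero at $v$ involves only $x_1,y_2$, and $\lambda$-invariance forces exponents $a=nd$, hence degree $(n+1)d\ge n+1$; since $\kvg\subseteq\kk[V]^{\lambda(\Gm)}$, every $G$-invariant of degree $\le n$ vanishes at $v$, so $\sigma(G,V)\ge n+1$ and $\sigma(G)=\infty$.

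The \textbf{main obstacle} is to carry out this last step for an arbitrary infinite $G$. Two things must be arranged uniformly: producing, for each $n$, a genuine high-degree $G$-invariant (not merely a $\lambda(\Gm)$-invariant) that survives at a suitable $v$, and ensuring no lower-degree $G$-invariant does. I expect to obtain the first by building the relevant $G$-modules as representations induced from characters of a maximal torus, so the surviving monomial appears inside an invariant produced by symmetrising over $G$; the second reduces, via $\kvg\subseteq\kk[V]^{\lambda(\Gm)}$, to the single-torus weight computation above. The case in which $G^0$ is not a torus I would instead route through $\delta(G)\le\sigma(G)$ and the unipotent results of Theorem \ref{DeltaGL2}, after exhibiting the relevant action as that of a subgroup of $\GL_2(\kk)$ containing an infinite unipotent group.
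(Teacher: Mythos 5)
Your finite direction (the norm argument giving $\sigma(G)\le |G|$) and your $\Gm$- and $\operatorname{O}_2$-computations are correct, but the step you yourself flag as the ``main obstacle'' is precisely the content of the theorem, and your plan for closing it would not work as stated. For an arbitrary infinite $G$ whose connected component contains a nontrivial torus $T$ but which admits no surjection onto $\Gm$ or $\Ga$, you propose to manufacture the high-degree genuine $G$-invariant by inducing from torus characters and ``symmetrising over $G$''. In characteristic $p>0$ there is no Reynolds operator for infinite non-linearly-reductive $G$, and transfer-type symmetrisations can vanish identically, so there is no guarantee that the surviving monomial sits inside any nonzero $G$-invariant; moreover the two-scale modules $\rho_1\oplus\rho_n$ you build are modules of $T$ (or of $\Gm\rtimes\ZZ/2$), not of $G$, and need not extend to $G$ at all. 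The paper (Proposition \ref{SigmaSemisimple}) resolves exactly this with one uniform trick you are missing: take $V$ a \emph{faithful} $G$-module, diagonalise $T$ on a basis $\{v_0,\dots,v_n\}$ with $T$ acting on $v_0$ with weight $r\ne 0$, and set $U_m:=V\oplus S^m(V^*)$. Then $\kk[U_m]$ contains a summand $S^m(V^*)\otimes S^m(V^*)^*\cong\Hom_\kk\bigl(S^m(V^*),S^m(V^*)\bigr)$, and the element $f=\sum_{|{\bf e}|=m}{\bf y}^{\bf e}Z_{\bf e}$ corresponding to the \emph{identity map} is automatically a $G$-invariant for every $G$ --- no character, induction or averaging needed --- of degree $m+1$ with $f(v_0+y_0^m)=1$. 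The lower bound then reduces, exactly as in your $\operatorname{O}_2$ model, to a single-torus weight count on monomials in $y_0$ and $Z_{(m,0,\dots,0)}$, forcing degree $\ge m+1$. This proves $\sigma(G)=\infty$ whenever $G$ contains a nontrivial torus, in any characteristic.

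There is also a smaller wiring gap in your remaining case. When $G^0$ is infinite unipotent, the copy of $\Ga$ arises as a \emph{quotient} $G^0/N$ of the connected component, not of $G$, and $G$ itself is in general not a subgroup of $\GL_2(\kk)$, so neither your quotient-monotonicity nor Theorem \ref{DeltaGL2} applies to $G$ directly. You additionally need the restriction inequality $\sigma(G^0)\le\sigma(G)$ for the finite-index subgroup $G^0$ (Proposition \ref{sigmaGG0}, quoted from the earlier paper and not obvious), after which $\sigma(G)\ge\sigma(G^0)\ge\delta(G^0)=\infty$ follows from Corollary \ref{DeltaUnipotent}. With Proposition \ref{SigmaSemisimple} and Proposition \ref{sigmaGG0} in hand, the paper's case split is simply: $G^0$ contains a torus (done uniformly by the identity-map invariant) or $G^0$ is infinite unipotent (done via $\Ga$ as a quotient of $G^0$); your surjection/no-surjection trichotomy then becomes unnecessary.
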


\begin{Theorem}\label{SigmaChar0} 
Let $G$ be a linear algebraic group over a field of characteristic~$0$. Then if $\sigma(G)$ is finite, $G^{0}$ is unipotent, i.e. either $G$ is finite or $G^{0}$ is infinite unipotent.
\end{Theorem}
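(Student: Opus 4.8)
The plan is to prove the contrapositive: if $G^{0}$ is not unipotent, then $\sigma(G)=\infty$. The first move is a reduction to reductive groups. In characteristic $0$ a Mostow--Levi decomposition gives $G\cong R_u(G)\rtimes L$ with $L\cong G/R_u(G)$ reductive, and since $G^{0}$ is not unipotent the reductive quotient $L^{0}=G^{0}/R_u(G^{0})$ is a nontrivial connected reductive group, hence contains a one-dimensional torus. I will use that $\sigma$ is monotone under quotients: inflation along $G\to G/N$ turns a $G/N$-module into a $G$-module on which $N$ acts trivially, so that the invariant ring and the nullcone are unchanged and $\sigma(G,V)=\sigma(G/N,V)$ for such $V$; taking suprema gives $\sigma(G)\ge\sigma(G/N)$. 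Applying this with $N=R_u(G)$ reduces the theorem to showing $\sigma(L)=\infty$ for an arbitrary positive-dimensional reductive group $L$.

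The engine of the lower bound is the following elementary observation. Fix a maximal torus $T\le L$, so that $\kk[V]$ is graded by the character group $X^{*}(T)$ and $\kk[V]^{T}$ is exactly its degree-zero part; consequently every monomial occurring in a $T$-invariant — in particular in any $f\in\kk[V]^{L}$ — is itself a $T$-weight vector of weight $0$. Suppose $v=\sum_{i=1}^{k} c_i e_{\mu_i}$ is supported on weight vectors $e_{\mu_1},\dots,e_{\mu_k}$ of weights $\mu_1,\dots,\mu_k$, with dual coordinates $z_{\mu_i}$. If $f\in\kk[V]^{L}$ does not vanish at $v$, then $f$ must contain a monomial $\prod_i z_{\mu_i}^{a_i}$ supported on these coordinates, and weight-zeroness forces $\sum_i a_i\mu_i=0$, whence, pairing with any cocharacter $\lambda\in X_{*}(T)$, also $\sum_i a_i\langle\mu_i,\lambda\rangle=0$. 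Thus if the minimal total degree $\sum_i a_i$ of a nonzero relation $\sum_i a_i\langle\mu_i,\lambda\rangle=0$ with $a_i\in\Z_{\ge0}$ is at least $m+1$, then every invariant of degree $\le m$ vanishes at $v$; provided $v\notin\mathcal{N}_{L,V}$ this yields $\sigma(L,V)\ge m+1$.

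It remains to manufacture, for each $m$, a representation, a cocharacter $\lambda$, and a point $v$ for which the minimal relation is large and $v$ is semistable. The prototype is $L=\SL_2$: take $V$ the binary forms of odd degree $d$, $\lambda=\diag(t,t^{-1})$, and $v=X^{d}+XY^{d-1}=X\,(X^{d-1}+Y^{d-1})$. Here the two occupied monomials have $\lambda$-weights $d$ and $2-d$, which are coprime for odd $d$, so the minimal vanishing combination has degree $2d-2$; and $v$ has $d$ distinct roots and is therefore $\SL_2$-stable, so $v\notin\mathcal{N}_{\SL_2,V}$. This already gives $\sigma(\SL_2,V)\ge 2d-2$ and hence $\sigma(\SL_2)=\infty$. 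For a general positive-dimensional reductive $L$ I would start from a nontrivial cocharacter $\lambda$ and a representation on which $\lambda$ has weights of both signs (e.g.\ $U\oplus U^{*}$ for a faithful $U$), pass to a high symmetric power $\Sym^{n}$ to create two weight vectors whose opposite $\lambda$-weights are large and coprime, and take $v$ to be their sum.

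The main obstacle is the semistability clause $v\notin\mathcal{N}_{L,V}$ for a general reductive $L$: unlike the $\SL_2$ case one cannot simply count roots. The right tool is the Hilbert--Mumford numerical criterion, by which $v$ is $L$-semistable precisely when $0$ lies in the convex hull of the $T$-weights of $g\cdot v$ for every $g\in L$. The delicate point is that applying an element $g\notin N_{L}(T)$ smears $v$ across many weight spaces, so the weight vectors must be chosen — for instance as extreme or highest-weight-adjacent vectors — so that this convexity condition survives for all $g$ while the minimal integer weight-relation along $\lambda$ stays large. These two demands are compatible in principle, as the rank-one weights $\{1,-m\}$ already show that $0$ can lie in the interior of the real convex hull even though the smallest integral relation has size $m+1$; the work is to reconcile them representation-theoretically for each $L$. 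Once such $v$ is produced for arbitrarily large $m$, one concludes $\sigma(L)=\infty$, and therefore $\sigma(G)=\infty$, completing the contrapositive.
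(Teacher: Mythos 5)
Your overall strategy (contrapositive, reduce to a torus, then force every low-degree invariant to vanish at a carefully chosen point via the $T$-weight grading) is the right one, and your ``engine'' is exactly the weight computation the paper uses. Your quotient-monotonicity reduction $\sigma(G)\ge\sigma(G/N)$ is sound (it parallels Remark \ref{DeltaQuotient}), and your $\SL_2$ example with $v=X^d+XY^{d-1}$ is correct. But the argument has a genuine gap, and you name it yourself: for a general positive-dimensional reductive $L$ you never actually produce, for each $m$, a point $v$ with large minimal weight relation \emph{and} $v\notin\mathcal{N}_{L,V}$. The semistability clause is not a technicality to be ``reconciled representation-theoretically for each $L$'' --- it is the entire difficulty. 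Your proposed construction (a sum of two weight vectors in $\Sym^{n}$ of $U\oplus U^{*}$ with large coprime opposite $\lambda$-weights) would have to survive the Hilbert--Mumford test for \emph{every} maximal torus, i.e.\ for every translate $g\cdot v$, and a point supported on just two weight vectors is typically destabilized by one-parameter subgroups not contained in your chosen $T$; nothing in the proposal controls this. Moreover, the Levi reduction is superfluous: a connected non-unipotent group already contains a nontrivial torus, so reducing to reductive $L$ buys nothing --- the nullcone condition for the full group is still the sticking point.

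The paper sidesteps precisely this obstacle, and it is worth seeing how. In Proposition \ref{SigmaSemisimple} one takes a faithful module $V$ with a basis vector $v_0$ on which the torus acts by a nonzero weight $r$, sets $U_m:=V\oplus S^m(V^*)$, and takes $v:=v_0+y_0^m$. Membership outside the nullcone is certified \emph{explicitly}: the tautological contraction
\[
f:=\sum_{|{\bf e}|=m}{\bf y}^{\bf e}Z_{\bf e},
\]
i.e.\ the identity element of $\Hom_\kk\bigl(S^m(V^*),S^m(V^*)\bigr)\cong S^m(V^*)\otimes S^m(V^*)^*$, is an invariant of the \emph{whole} group $G$ of degree $m+1$ with $f(v)=1$, so no semistability criterion is ever invoked. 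The matching lower bound is then your own engine in its simplest instance: the only coordinates not vanishing at $v$ are $y_0$ and $Z_{{\bf e}_0}$, of $T$-weights $-r$ and $mr$, so any $T$-invariant monomial nonzero at $v$ has the form $y_0^{ml}Z_{{\bf e}_0}^{l}$ with $l\ge 1$, of degree at least $m+1$. This gives $\sigma(G,U_m)\ge m+1$ whenever $G$ contains a nontrivial torus, in any characteristic; combined with $\sigma(G^0)\le\sigma(G)$ (Proposition \ref{sigmaGG0}) and the fact that a connected group of rank zero is unipotent, the theorem follows. To repair your proof you would either need to carry out the semistability analysis you defer --- genuinely hard for arbitrary $L$ --- or, better, replace the ``high symmetric power of $U\oplus U^*$'' idea with an auxiliary module like $V\oplus S^m(V^*)$ on which an invariant witnessing $v\notin\NGV$ can be written down by hand.
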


As reductive groups do not contain a non-trivial connected unipotent normal
subgroup, we get as an immediate corollary:

\begin{Corollary}
Let $G$ be a reductive group over a field of arbitrary
characteristic. Then $\sigma(G)$ is finite if and only if $G$ is finite.
\end{Corollary}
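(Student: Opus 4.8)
The plan is to reduce everything to the two preceding theorems together with a single structural fact about reductive groups, handling the two characteristics separately.

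In positive characteristic there is essentially nothing to add: Theorem~\ref{SigmaCharP} already asserts that for \emph{any} linear algebraic group $G$ over a field of characteristic $p>0$, $\sigma(G)$ is finite if and only if $G$ is finite, and reductivity is not even needed. I would therefore dispose of this case in one line and concentrate on characteristic zero.

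For the forward implication in characteristic zero, suppose $\sigma(G)$ is finite. Theorem~\ref{SigmaChar0} then tells us that $G^{0}$ is unipotent. Now I would invoke the structure theory of reductive groups. Since $G^{0}$ is a connected normal subgroup of $G$, if it is unipotent it is a nontrivial connected normal unipotent subgroup unless it is trivial; but a reductive group contains no nontrivial connected normal unipotent subgroup (its unipotent radical $R_u(G)$ is trivial, and any such subgroup lies in $R_u(G)$). Hence $G^{0}=\{e\}$, and therefore $G=G/G^{0}$ is finite. Equivalently, $G^{0}$ inherits reductivity from $G$, and a connected group that is simultaneously reductive and unipotent must be trivial.

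For the converse I would use that a finite group has finite $\sigma(G)$ in every characteristic. In characteristic zero this is classical: for finite $G$ the nullcone $\NGV$ collapses to $\{0\}$, and if $v\notin\NGV$ then some positive-degree generator of $\kvg$ must be nonzero at $v$, so any finite set of positive-degree algebra generators is a $\sigma$-set; this bounds $\sigma(G)$ by the Noether number, which is at most $|G|$. (Alternatively one may cite the corresponding finite-group statement from \cite{ElmerKohlsSigmaDelta}.) In positive characteristic the converse is again contained in Theorem~\ref{SigmaCharP}. I do not expect a genuine obstacle: the entire weight of the result rests on Theorems~\ref{SigmaCharP} and~\ref{SigmaChar0}, the only fresh input being the elementary observation that reductivity is incompatible with a nontrivial connected unipotent identity component; the single point deserving care is to justify the reverse direction uniformly across characteristics, which is why I keep the cases separate rather than quoting one characteristic-free statement.
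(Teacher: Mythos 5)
Your proposal is correct and follows essentially the same route as the paper, which derives the corollary immediately from Theorems~\ref{SigmaCharP} and~\ref{SigmaChar0} together with the fact that a reductive group contains no non-trivial connected normal unipotent subgroup, so that $G^{0}$ must be trivial; the converse for finite groups is, as you note, standard (e.g.\ via $\sigma(G)\le\beta(G)$ and the results of \cite{ElmerKohlsSigmaDelta}). Your extra care in spelling out the reverse direction in both characteristics is harmless but adds nothing beyond the paper's one-line argument.
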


Somewhat surprisingly, for the (infinite) additive group $\Ga=(\kk,+)$ of a
field $\kk$ of characteristic zero, we will see $\sigma(\Ga)=2$. We do not
know whether $\sigma(G)$ is finite for all unipotent groups in characteristic zero. 

Another quantity associated with $\delta(G,V)$ and $\sigma(G,V)$, which has
attracted some attention in recent years, is $\beta_{\sep}(G,V)$.  It is
defined as follows: a subset $S \subseteq \kk[V]^G$ is called a
\emph{separating set} if, for any pair $v, w \in V$ such that there
exists $f \in \kk[V]^G$ with $f(v) \neq f(w)$, there exists $s \in S$ with
$s(v) \neq s(w)$. Now again, $\beta_{\sep}(G,V)$ and $\beta_{\sep}(G)$ are defined
along the same lines as $\sigma(G,V)$ and $\sigma(G)$.
Our point of view is that $\delta$- and $\sigma$-sets are ``zero-separating''
sets. This leads to the inequalities \cite[Proposition 1.4]{ElmerKohlsSigmaDelta} 
\[\delta(G,V) \leq \sigma(G,V) \leq \beta_{\sep}(G,V) \leq \beta(G,V)\]
for any linear algebraic group $G$ and $G$-module $V$, hence
\[
\delta(G) \leq \sigma(G) \leq \beta_{\sep}(G) \leq \beta(G),
\]  
where $\beta(G,V)$ and $\beta(G)$ are the classical local and global
Noether number. The second author and Kraft
 have shown \cite{KohlsKraft} that $\beta_{\sep}(G)$ is finite if and only if
$G$ is finite (indepedently of the characteristic of $\kk$). Some parts of
the proof of this result required some deep results from geometric invariant
theory. The results of our current paper allows one to replace these parts of the
proof by more elementary arguments, see section \ref{SigmaSection}.

\section{General results on the $\delta-$ number}\label{GeneralDelta}

In this section, we prove various general results on $\delta(G)$. For the convenience of the reader,
we present the proof of the following result of Nagata and Miyata in language consistent with this article.

\begin{Prop}[{Nagata and Miyata \cite[Proof of Theorem~1]{NagataMiyata}}]\label{NagataMiyata} 
Let $G$ be a linear algebraic group over a field $\kk$, and $V$  a
$G$-module. Suppose $v \in V^G$ and  $f \in \kk[V]_{+}^G$ is homogeneous such that $f(v)
\neq 0$. If the characteristic of $\kk$ does not divide the degree of $f$,
then there exists a homogeneous invariant $\tilde{f} \in \kk[V]_1^G$ of degree
one satisfying $\tilde{f}(v) \neq 0$. 
\end{Prop}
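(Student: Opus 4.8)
The plan is to exploit the fact that $v$ is a fixed point in order to turn translation by $v$ into a $G$-equivariant operation, and then to read off a linear invariant from the translated version of $f$. First I would observe that since $v\in V^{G}$ and $G$ acts linearly, the translation map $\tau\colon V\to V$, $w\mapsto w+v$, is $G$-equivariant: for every $g\in G$ we have $g(w+v)=gw+gv=gw+v$. Consequently the comorphism $\tau^{*}\colon \kv\to\kv$, $h\mapsto h(\,\cdot\,+v)$, is a $G$-algebra automorphism, and in particular it maps $\kvg$ into itself. Applying it to $f$ produces an invariant $\tau^{*}f=f(\,\cdot\,+v)$, which I decompose into its homogeneous components $\tau^{*}f=\sum_{i=0}^{d}h_{i}$, where $d=\deg f$ and $h_{i}$ is homogeneous of degree $i$. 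Because $G$ acts by degree-preserving automorphisms, each homogeneous component of an invariant is again invariant, so $h_{i}\in\kvg$ for every $i$; in particular $h_{1}\in\kk[V]_{1}^{G}$ is a degree-one invariant.

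Next I would pin down the value $h_{1}(v)$ by restricting everything to the line through $v$. For a scalar $s$, homogeneity of $f$ gives $f(sv+v)=f((s+1)v)=(s+1)^{d}f(v)$, while expanding the components gives $\tau^{*}f(sv)=\sum_{i}h_{i}(sv)=\sum_{i}s^{i}h_{i}(v)$, using that each $h_{i}$ is homogeneous of degree $i$. Comparing this with the binomial expansion $(s+1)^{d}f(v)=\sum_{i}\binom{d}{i}s^{i}f(v)$ and matching coefficients of $s^{i}$ yields $h_{i}(v)=\binom{d}{i}f(v)$ for all $i$, and in particular $h_{1}(v)=d\,f(v)$.

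Finally, the characteristic hypothesis enters only at the last step: since $\chr(\kk)$ does not divide $d$, the scalar $d$ is nonzero in $\kk$, and as $f(v)\neq 0$ we conclude $h_{1}(v)=d\,f(v)\neq 0$. Thus $\tilde f:=h_{1}\in\kk[V]_{1}^{G}$ is the desired degree-one invariant with $\tilde f(v)\neq 0$. The argument is driven by two structural facts—that a fixed point makes translation equivariant, and that homogeneous components of invariants are themselves invariant—rather than by any delicate estimate; the only role of the characteristic assumption is to guarantee the nonvanishing of $d$ in the final line, so I expect no genuine obstacle beyond verifying these two structural points.
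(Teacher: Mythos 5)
Your proof is correct, and it takes a genuinely different --- and more conceptual --- route than the paper's. The paper follows Nagata--Miyata's coordinate computation: extend $v$ to a basis, write $f=\sum_{i} x_0^{d-i}c_i$ with $c_0=1$, act by an arbitrary $g\in G$ using $g\cdot x_0 = x_0+y(g)$, compare coefficients of $x_0^{d-1}$ to deduce $c_1 = dy(g)+(g\cdot c_1)$, and then verify by hand that $\tilde f := x_0+d^{-1}c_1$ is invariant. You instead obtain invariance for free from two structural facts: translation $\tau\colon w\mapsto w+v$ by the fixed point is $G$-equivariant, so $\tau^{*}f$ is invariant, and since the action on $\kk[V]$ is degree-preserving, each homogeneous component $h_i$ of $\tau^{*}f$ is invariant; the binomial computation along the line $\kk v$ then gives $h_1(v)=d\,f(v)\neq 0$. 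In fact the two constructions agree up to the scalar $d$: in the paper's coordinates, $\tau^{*}$ is the substitution $x_0\mapsto x_0+1$ (fixing $x_1,\dots,x_n$), and the degree-one component of $\tau^{*}f=\sum_i (x_0+1)^{d-i}c_i$ is exactly $dx_0+c_1=d\tilde f$, so your $h_1$ is $d$ times the paper's invariant. What your approach buys is a cleaner verification of invariance, with no coefficient chase in the group variable $g$; the one point worth making explicit is that the identification $h_i(v)=\binom{d}{i}f(v)$ comes from matching coefficients in a polynomial identity in the scalar $s$, which is legitimate because $\kk$ is infinite (it is algebraically closed in this paper's conventions) --- or, more robustly, one can take $s$ to be a formal indeterminate, since both $f(sv+v)=(s+1)^d f(v)$ and $\sum_i s^i h_i(v)$ are formal polynomial identities, making the step independent of the cardinality of $\kk$.
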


\begin{proof} 
Write $d:= \deg(f)$. Choose a basis $\{v=:v_0, v_1, \ldots, v_n\}$ of $V$ and
let $\{x_0,x_1,\ldots, x_n\}$ be the corresponding dual basis. Since $f(v)
\neq 0$, we can write $f = \sum_{i=0}^dx_0^{d-i}c_i $ with $c_i \in
\kk[x_1,x_2,\ldots, x_n ]_i$ for each $i = 0, \ldots, d$ and $c_0\in\kk\setminus\{0\}$. Without loss we assume $c_{0}=1$. Further, since $v \in V^G$, note that $\langle
x_1,x_2,\ldots, x_n \rangle$ is a $G$-invariant space, and we can write $g
\cdot x_0 = x_0 + y(g)$ with $y(g) \in \langle x_1,x_2,\ldots, x_n \rangle$
for each  $g \in G$. For any $g\in G$, we have
\begin{eqnarray*}
g \cdot f& =& (g \cdot x_0)^d + (g \cdot c_1)(g \cdot x_0)^{d-1} + (\text{terms
of $x_{0}$-degree}\le d-2)\\
&=&  (x_0 + y(g))^d + (g \cdot c_1)(x_0 + y(g))^{d-1} +  (\text{terms
of $x_{0}$-degree}\le d-2)\\
&=& x_0^d + (dy(g) + (g \cdot c_1))x_0^{d-1} + (\text{terms
of $x_{0}$-degree}\le d-2)=f,
\end{eqnarray*}
since $f$ is invariant. Comparing coefficients of $x^{d-1}_0$ tells us that for any $g \in G$ we have $c_1 = dy(g) + (g \cdot c_1)$.
By assumption, the degree $d$ is invertible in $\kk$, and we now set
$\tilde{f}:= x_0+d^{-1}c_1$. Notice that $\deg(\tilde{f}) = 1$, and for any $g \in G$ we have 
\[
g \cdot \tilde{f} = g \cdot x_0 + d^{-1}(g \cdot c_1) = x_0 + y(g) +
d^{-1}(c_1 - dy(g)) = \tilde{f},
\]
 so $\tilde{f} \in \kk[V]^G_1$. Furthermore $\tilde{f}(v) = x_0(v) + d^{-1}c_1(v) = x_0(v)=1 \neq 0$, completing the proof.
\end{proof}

\begin{Corollary}\label{DeltaCharZero} 
Let $G$ be a linear algebraic group  and $V$  a $G$-module. Then $\delta(G,V)$
equals either $0$ or $1$ or is divisible by the characteristic of $\kk$. In particular, if $\kk$ is a field of characteristic zero, then $\delta(G) = 1$.
\end{Corollary}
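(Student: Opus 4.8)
The plan is to deduce everything from Proposition~\ref{NagataMiyata}: that result already contains all the substance, and the corollary is just its translation into a statement about the minimal degree bound $\delta(G,V)$. First I would clear away the degenerate case $V^G\subseteq\NGV$, in which $V^G\setminus\NGV=\emptyset$, the $\delta$-set condition is vacuous, and $\delta(G,V)=0$. So I may assume $V^G\not\subseteq\NGV$ and write $d:=\delta(G,V)\ge 1$. I would also record that $d$ is finite: $V\setminus\NGV$ is quasi-compact and covered by the basic opens $D(f)$ attached to homogeneous invariants $f$ of positive degree, so finitely many such $f$ suffice and the invariants up to their maximal degree already form a $\delta$-set.

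The core is a minimality argument. Suppose $d\ge 2$. By minimality of $d$, the set $\kk[V]^G_{\le d-1}$ is not a $\delta$-set, so I can choose $v\in V^G\setminus\NGV$ at which every homogeneous invariant of positive degree at most $d-1$ vanishes. Since $\kk[V]^G_{\le d}$ is a $\delta$-set, there is $h\in(\kk[V]^G_{\le d})_+$ with $h(v)\ne 0$; decomposing $h$ into homogeneous components and discarding those of degree $1,\dots,d-1$ (which kill $v$) and the constant term (which is zero), the degree-$d$ component $f$ is a homogeneous invariant of degree exactly $d$ with $f(v)\ne 0$. If now $p\nmid d$, Proposition~\ref{NagataMiyata} hands me $\tilde f\in\kk[V]^G_1$ with $\tilde f(v)\ne 0$, and since $1\le d-1$ this contradicts the choice of $v$. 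Hence $p\mid d$, giving the trichotomy: $\delta(G,V)$ is $0$, $1$, or divisible by $p$.

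For the characteristic-zero statement I specialise $p=0$: ``divisible by $0$'' means equal to $0$, so the trichotomy forces $\delta(G,V)\in\{0,1\}$ for every $V$ and hence $\delta(G)\le 1$. Evaluating on the trivial one-dimensional module, where $V^G\setminus\NGV=\kk\setminus\{0\}$ is separated from $0$ by its degree-one coordinate function, gives $\delta(G)\ge 1$, so $\delta(G)=1$. I do not anticipate a genuine obstacle: the argument is short, and the only point requiring care is the passage from the possibly inhomogeneous separating invariant $h$ to a homogeneous one of degree \emph{exactly} $d$ --- this is where minimality of $d$ is used, and it is essential because the hypothesis of Proposition~\ref{NagataMiyata} needs the degree of the separating invariant to be prime to $p$.
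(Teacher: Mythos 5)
Your proof is correct and takes essentially the same route as the paper: both results are read off directly from Proposition~\ref{NagataMiyata}, the paper by replacing every homogeneous invariant of degree prime to the characteristic in a homogeneous $\delta$-set with a degree-one invariant, you by choosing a witness point $v$ killed by all invariants of degree below the minimal $d$ and deriving a contradiction unless the characteristic divides $d$. Your additional checks --- the finiteness of $\delta(G,V)$ via quasi-compactness and the extraction of the homogeneous degree-$d$ component of the separating invariant --- are sound and merely make explicit what the paper leaves implicit.
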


\begin{proof} 
Note firstly that if $V^{G}\subseteq\NGV$, then $\delta(G,V) =
0$. Otherwise, $\delta(G,V)\ge 1$. Applying the above proposition shows that
for any $\delta$-set $S$ consisting of homogeneous invariants, the set
$\kk[V]^{G}_{1}\cup \{f\in S\mid  \deg(f) \text{ divisible by the
  characteristic}\}$ is also a $\delta$-set. Finally, since $\delta(G,V) = 1$ when $V=\kk$ is the trival module, we must have $\delta(G) \geq 1$ for any linear algebraic group $G$.  
\end{proof}

The proof of the following result is a slight adaption of Nagata \cite[Lemma
3.1]{NagataAffine}, where it is shown that if $N$ is a closed normal subgroup
of $G$ such that $N$ and $G/N$ are reductive, then $G$ is reductive.

\begin{Prop}\label{DeltaNormalSubgroupFiniteIndex} 
Let $N$ be a closed normal subgroup of $G$ such that $G/N$ is reductive. Then
for any $G$-module $V$, we have
\[\delta(G,V) \leq \delta(N,V)\delta(G/N)\le \delta(N)\delta(G/N),\]
so in particular we have $\delta(G)\le\delta(N)\delta(G/N)$.
\end{Prop}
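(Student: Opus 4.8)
The plan is to reduce the separation problem for $G$ to separation for $N$ followed by separation for the reductive quotient $G/N$, keeping track of how the degrees multiply. Fix a $G$-module $V$ and a point $v \in V^G \setminus \NGV$; I want to produce a homogeneous $f \in \kvg$ of positive degree at most $\delta(N,V)\delta(G/N)$ with $f(v)\neq 0$. First I would observe that $v$ already lies in $V^N \setminus \mathcal{N}_{N,V}$: indeed $V^G \subseteq V^N$ since $N \le G$, while the inclusion $\kk[V]^G_+ \subseteq \kk[V]^N_+$ gives $\mathcal{N}_{N,V} \subseteq \NGV$, so $v \notin \NGV$ forces $v \notin \mathcal{N}_{N,V}$. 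By the definition of $\delta(N,V)$ there is then a homogeneous $N$-invariant $f_0$ of positive degree $e \le \delta(N,V)$ with $f_0(v)\neq 0$.

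The key construction is to package the $G$-orbit of $f_0$ as a $G/N$-module. Let $W := \langle G\cdot f_0\rangle \subseteq \kk[V]^N_e$ be the $G$-submodule of the finite-dimensional space $\kk[V]_e$ generated by $f_0$. Because $N$ is normal, $G$ preserves $\kk[V]^N$, and because $N$ acts trivially on $\kk[V]^N$ the $G$-action on $W$ factors through $G/N$; thus $W$ is a $G/N$-module. Evaluation at $v$ restricts to a linear functional $\phi \in W^*$, and since $v$ is $G$-fixed one checks that $\phi$ is $G/N$-invariant, i.e.\ $\phi \in U^{G/N}$ where $U := W^*$. Moreover $\phi \neq 0$ because $\phi(f_0)=f_0(v)\neq 0$. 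Here is where the hypotheses enter: $G/N$ is reductive, hence geometrically reductive, so $U^{G/N}\cap \mathcal{N}_{G/N,U}=\{0\}$ and therefore $\phi\notin \mathcal{N}_{G/N,U}$. The definition of $\delta(G/N)$ now yields a homogeneous $G/N$-invariant $F$ on $U$ of positive degree $d \le \delta(G/N,U)\le \delta(G/N)$ with $F(\phi)\neq 0$ (replacing $F$ by a homogeneous component if the $\delta$-set only provides an inhomogeneous separator). Identifying $\kk[U]=\Sym(W)$, this $F$ is an element of $\Sym^d(W)^{G/N}=\Sym^d(W)^G$.

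Finally I would transport $F$ back to $\kk[V]$. The inclusion $W \hookrightarrow \kk[V]^N$ induces a $G$-equivariant algebra homomorphism $m\colon \Sym(W)\to \kk[V]^N$ given by multiplication of polynomials, and $f:= m(F)\in \kk[V]^N_{ed}$ is $G$-invariant because $F$ is, so $f \in \kvg$ with $\deg f = ed \le \delta(N,V)\delta(G/N)$. A direct computation on decomposable elements shows $f(v)=F(\phi)$: writing $F$ as a combination of products $w_1\cdots w_d$ with $w_i\in W$, one has $(w_1\cdots w_d)(v)=\prod_i w_i(v)=\prod_i \phi(w_i)$, which is exactly the value of $F$ at $\phi\in W^*$. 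Hence $f(v)=F(\phi)\neq 0$, proving $\delta(G,V)\le \delta(N,V)\delta(G/N)$. Bounding $\delta(N,V)\le \delta(N)$ and taking the supremum over all $G$-modules $V$ gives the remaining inequalities, including $\delta(G)\le \delta(N)\delta(G/N)$.

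The main obstacle, and the only place the hypotheses on $N$ and $G/N$ are genuinely used, is the passage from an $N$-invariant to a $G$-invariant without losing control of the degree. Recognising $\mathrm{ev}_v$ as a nonzero $G/N$-fixed point of $W^*$ is what makes geometric reductivity of $G/N$ and the bound $\delta(G/N)$ applicable, and the degree multiplies precisely because $\Sym^d(W)$ lands in $\kk[V]_{ed}$ under $m$. I expect the only subtleties to be routine verifications: that $G$ preserves $\kk[V]^N$ (normality), that the $G$-action on $W$ descends to $G/N$ (triviality of $N$ on its own invariants), and the identification of $\kk[U]$ with $\Sym(W)$ under which $m(F)(v)=F(\phi)$.
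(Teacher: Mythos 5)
Your proof is correct and follows essentially the same route as the paper's: both pass from an $N$-invariant $f_0$ of positive degree at most $\delta(N,V)$ to a finite-dimensional $G/N$-module of $N$-invariants of that degree, recognise evaluation at $v$ as a nonzero $G/N$-fixed functional on it, separate that functional using reductivity of $G/N$ within degree $\delta(G/N)$, and transport the resulting invariant back along the multiplication map into $\kk[V]^G$, where the degrees multiply. The only cosmetic differences are that the paper works with the full homogeneous component $U=\kk[V]^N_d$ and realises the evaluation functional via the splitting $U=U_0\oplus\kk f_0$ (after normalising $f_0(v)=1$), verifying non-vanishing at $v$ through the decomposition $F=c f_0^{d'}+\tilde{F}$, whereas you use the cyclic submodule $\langle G\cdot f_0\rangle$ and the cleaner observation that $\mathrm{ev}_v\circ m$ and $F\mapsto F(\phi)$ are algebra homomorphisms on $S(W)$ agreeing on $W$.
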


\begin{proof}
Take a point $v\in V^{G}\setminus\NGV$. As a $G$-invariant separating $v$ from zero is clearly
also an $N$-invariant, we see that $v \in V^N \setminus
\mathcal{N}_{N,V}$. Therefore there is a homogeneous $f_{0}\in\kk[V]^{N}$ of
positive degree $d\le\delta(N,V)$ satisfying $f_{0}(v)\ne 0$. Without
loss, we assume $f_{0}(v)=1$.
Note that  as $N$ is a normal subgroup of $G$, we have that $U:=\kk[V]_{d}^{N}$ is a $G$-module on which $N$ acts trivially, so it can be considered as a
$G/N$-module. Further we define $U_{0}:=\{f\in U\mid f(v)=0\}$. Note that
$U_{0}$ is a $G$-invariant subspace of $U$, since $v\in V^{G}$. As
$f_{0}\not\in U_{0}$, we have $U_{0}\ne U$. For any $f\in U$, we have
$f=(f-f(v)f_{0})+f(v)f_{0}$ with $f-f(v)f_{0}\in U_{0}$, hence $U=U_{0}\oplus
\kk f_{0}$ as a vector space. We can therefore define $\varphi\in U^{*}$ by
$\varphi(u_{0}+\lambda f_{0}):=\lambda$ for $u_{0}\in U_{0}$ and $\lambda\in
\kk$. It is easily seen that $\varphi$ is $G$-invariant. As mentioned, we can consider $U$ as a $G/N$-module, and then we
have $\varphi\in (U^{*})^{G/N}\setminus\{0\}$. By assumption, $G/N$ is a
reductive group, so there exists a homogeneous
$F\in\kk[U^{*}]^{G/N}_{d'}=S^{d'}(U)^{G/N}$ of some positive degree $d'\le\delta(G/N)$
such that $F(\varphi)\ne 0$. Let $\{f_{1},\ldots,f_{r}\}$ denote a basis of
$U_{0}$. Since $\varphi|_{U_{0}}=0$,  the fact that $F\in S^{d'}(\langle f_{0},f_{1},\ldots,f_{r}\rangle)^{G/N}$ such
that $F(\varphi)\ne 0$ implies $F=c\cdot f_{0}^{d'}+\tilde{F}$, where
$c\in\kk\setminus\{0\}$ and $\tilde{F}$ is an element of the ideal
$(f_{1},\ldots,f_{r})S(U)$. Note that as $U=\kk[V]^{N}_{d}$, there
is a canonical map $S^{d'}(U)^{G/N}\rightarrow \kk[V]^{G}_{dd'}$, so we can take $F$ as
an element of $\kk[V]^{G}_{dd'}$. Clearly, $F(v)=cf_{0}(v)^{d'}\ne 0$ as
$f_{i}(v)=0$ for $i=1,\ldots,r$ by the definition of $U_{0}$, showing
that $\delta(G,V)\le\delta(N,V)\delta(G/N)$.
\end{proof}

\begin{Corollary}\label{DeltaG0} 
Let $G$ be a linear algebraic group and let $G^0$ denote the connected component of the identity. Then we have
\[
\delta(G) \leq \delta(G/G^0)\delta(G^0).
\]
In particular, $\delta(G)$ is finite if $\delta(G^0)$ is finite.
\end{Corollary}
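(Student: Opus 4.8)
The plan is to derive Corollary~\ref{DeltaG0} as a direct application of Proposition~\ref{DeltaNormalSubgroupFiliteIndex} by taking $N = G^0$. The key observation is that the connected component $G^0$ is a closed normal subgroup of $G$, and the quotient $G/G^0$ is a finite group. A finite group is reductive (indeed, any finite group over any field is geometrically reductive, essentially because orbits are finite and one can separate points using products over the orbit), so the hypothesis of Proposition~\ref{DeltaNormalSubgroupFiliteIndex} is satisfied with this choice of $N$.

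First I would verify that $G^0$ is a closed normal subgroup; this is a standard fact in the theory of linear algebraic groups, since $G^0$ is characterised as the unique irreducible component of $G$ containing the identity, and conjugation by any element of $G$ is an automorphism fixing the identity, hence permutes irreducible components and must fix $G^0$. Next I would note that $G/G^0$ is finite, since a linear algebraic group has only finitely many irreducible components, and these are precisely the cosets of $G^0$. Having established that $G/G^0$ is a finite (hence reductive) group, I would apply Proposition~\ref{DeltaNormalSubgroupFiliteIndex} with $N = G^0$ to obtain
\[
\delta(G) \leq \delta(G^0)\,\delta(G/G^0).
\]

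For the final assertion, I would observe that if $\delta(G^0)$ is finite, then the product $\delta(G^0)\,\delta(G/G^0)$ is finite provided $\delta(G/G^0)$ is itself finite. Since $G/G^0$ is a finite group, its $\delta$-number is finite by Theorem~\ref{thmdeltag} (which gives $\delta(G/G^0) = |P|$ for $P$ a Sylow-$p$-subgroup), so the product is finite and hence $\delta(G)$ is finite as well.

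I do not anticipate any serious obstacle here, as this corollary is an essentially immediate specialisation of the preceding proposition. The only points requiring care are the two standard structural facts about algebraic groups---that $G^0$ is closed and normal, and that $G/G^0$ is finite---both of which are classical and can simply be cited rather than proved. The substantive content was already carried out in Proposition~\ref{DeltaNormalSubgroupFiliteIndex}, so the proof of the corollary amounts to recording the correct instantiation.
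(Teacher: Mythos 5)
Your proposal is correct and follows essentially the same route as the paper: the corollary is an immediate instantiation of Proposition~\ref{DeltaNormalSubgroupFiniteIndex} with $N = G^0$, using the standard facts that $G^0$ is closed and normal with finite quotient $G/G^0$, and that a finite group is reductive (with $\delta(G/G^0)$ finite by Theorem~\ref{thmdeltag}). Your added justifications of the structural facts and of the finiteness of $\delta(G/G^0)$ are sound and consistent with what the paper leaves implicit.
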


\begin{rem}\label{DeltaQuotient} 
If $N$ is a normal subgroup of $G$, then
$\delta(G/N) \leq \delta(G)$, since any $G/N$-module becomes a $G$-module via the map $G\rightarrow G/N$. 
\end{rem}

\begin{proof}[Proof of the Theorem \ref{deltaTorus}] 
As tori are
linearly reductive, $\delta(G^{0})=1$. Hence we get $\delta(G/G^{0})\le
\delta(G)\le\delta(G^{0})\delta(G/G^{0})=\delta(G/G^{0})$, so
$\delta(G)=\delta(G/G^{0})$. As $G/G^{0}$ is a finite group, the value of $\delta(G/G^0)$ is the size of a Sylow-$p$-subgroup by Theorem \ref{thmdeltag}.
\end{proof}

Theorem \ref{deltaTorus} shows that there are many examples of infinite groups
$G$ with finite $\delta(G)>1$; simply define $G = P \times T$ where $P$ is a
finite $p$-group and $T$ a nontrivial torus, then $\delta(G)=|P|$. For a more
interesting example, consider $G = \operatorname{O}_2(\kk)$ with $\kk$ an
algebraically closed field
of characteristic $2$. It is well known that $G \cong \kk^{*} \rtimes Z_{2}$, where $Z_2$ denotes the
cyclic group of order 2. Therefore $G^0 \cong \kk^*$ is a torus, and
$G/G^{0}\cong Z_{2}$. By Theorem \ref{deltaTorus}, $\delta( \operatorname{O}_2(\kk))=2$.

\section{The $\delta$-number for subgroups of $\GL_2(\kk)$}

The goal of this section is to prove Theorem \ref{DeltaGL2}. Throughout we assume $\kk$ is a field of characteristic $p>0$. We begin by introducing another number associated to a representation of a group, which is useful for finding lower bounds for both the $\delta$-number and $\sigma$-number. Let $G$ be a linear algebraic group and $V$ a $G$-module. Let $v \in V$. Then we set
\begin{equation*}
\epsilon(G,v): = \inf\{d\in\N_{>0}\mid
  \text{ there exists } f \in \kk[V]_d^G \text{ such that } f(v) \neq 0\},
\end{equation*}
where the infimum of an empty set is infinity. 
Notice that if $V^{G}\setminus\NGV\ne\emptyset$, then
\[
\delta(G,V)=\sup\{\epsilon(G,v)\mid v \in V^{G}\setminus\NGV\},
\]
and if $V\setminus\NGV\ne\emptyset$, then
\[
\quad\sigma(G,V)=\sup\{\epsilon(G,v)\mid v\in
  V\setminus\NGV\}.
\]
For a submodule $W \subseteq V$ we define
\begin{equation*} \epsilon(G,W,V) :=
  \inf\{\epsilon(G,v)\mid v \in W \setminus \mathcal{N}_{G,V}\},
\end{equation*}
and we set
\[
\epsilon(G,V):=\epsilon(G,V^{G},V) \quad\text{ and
}\quad\tau(G,V):=\epsilon(G,V,V).
\]

It is immdiately clear that for any linear algebraic group $G$ we have
$\delta(G,V) \geq \epsilon(G,V)$ if $V^{G}\setminus\NGV\ne\emptyset$ and
$\sigma(G,V) \geq \tau(G,V)$ if $V\setminus\NGV\ne\emptyset$ . In fact, we
have the following slightly stronger result, which we mainly use for $H$ a
\emph{finite} subgroup of $G$ (the second inequality is not used and only
stated for completeness):

\begin{Lemma}\label{RelativeEpsilon} 
Let $G$ be a linear algebraic group, $V$ a $G$-module and $H$ a subgroup of
$G$. Then
$\delta(G,V) \geq \epsilon(H,V)$ if $V^G \setminus \mathcal{N}_{G,V} \neq
\emptyset$  and $\sigma(G,V) \geq \tau(H,V)$ if $V\setminus\NGV\ne\emptyset$.
\end{Lemma}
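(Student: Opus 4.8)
The plan is to reduce both inequalities to two elementary monotonicity observations about what happens when one passes from the invariants of $G$ to those of a subgroup $H \subseteq G$. The first is that every homogeneous $G$-invariant is in particular an $H$-invariant, so $\kk[V]^G_d \subseteq \kk[V]^H_d$ for each $d$; consequently $\epsilon(H,v) \leq \epsilon(G,v)$ for every $v \in V$, since the infimum defining $\epsilon(H,v)$ ranges over a larger family of candidate invariants. The second is that $\kk[V]^G_+ \subseteq \kk[V]^H_+$ forces the reverse inclusion on the nullcones, $\mathcal{N}_{H,V} \subseteq \NGV$, and likewise $V^G \subseteq V^H$ on the fixed spaces. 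Once these are in hand the result is a matter of chaining inequalities.

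For the first claim, assume $V^G \setminus \NGV \neq \emptyset$ and fix any $v \in V^G \setminus \NGV$. By the two inclusions above, $v \in V^H \setminus \mathcal{N}_{H,V}$, so $v$ is one of the points over which the infimum defining $\epsilon(H,V) = \epsilon(H, V^H, V)$ is taken; hence $\epsilon(H,V) \leq \epsilon(H,v)$. Combining this with $\epsilon(H,v) \leq \epsilon(G,v)$ gives $\epsilon(H,V) \leq \epsilon(G,v)$. As this holds for every $v \in V^G \setminus \NGV$, taking the supremum over such $v$ and using $\delta(G,V) = \sup\{\epsilon(G,v) \mid v \in V^G \setminus \NGV\}$ yields $\delta(G,V) \geq \epsilon(H,V)$.

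The second claim is entirely analogous: assuming $V \setminus \NGV \neq \emptyset$ and fixing $v \in V \setminus \NGV$, the inclusion $\mathcal{N}_{H,V} \subseteq \NGV$ gives $v \in V \setminus \mathcal{N}_{H,V}$, so $\tau(H,V) = \epsilon(H,V,V) \leq \epsilon(H,v) \leq \epsilon(G,v)$; taking the supremum over $v \in V \setminus \NGV$ and invoking $\sigma(G,V) = \sup\{\epsilon(G,v) \mid v \in V \setminus \NGV\}$ gives $\sigma(G,V) \geq \tau(H,V)$. There is no serious obstacle here; the only point requiring care is the bookkeeping of which nullcone ($\mathcal{N}_{H,V}$ or $\NGV$) is relevant at each step, and in particular checking that the nonemptiness hypothesis on $V^G \setminus \NGV$ (respectively $V \setminus \NGV$) guarantees the corresponding sets for $H$ are nonempty, so that the suprema and infima involved are genuinely defined.
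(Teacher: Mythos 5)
Your proof is correct and takes essentially the same route as the paper: both rest on the observations that $\kk[V]^G_d \subseteq \kk[V]^H_d$ (hence $\epsilon(H,v)\le\epsilon(G,v)$), $V^G\subseteq V^H$, and $\mathcal{N}_{H,V}\subseteq\NGV$, so that any $v\in V^G\setminus\NGV$ (resp.\ $v\in V\setminus\NGV$) lies in the set over which $\epsilon(H,V)$ (resp.\ $\tau(H,V)$) is defined as an infimum. The only cosmetic difference is that the paper fixes a point $v$ attaining $\delta(G,V)=\epsilon(G,v)$, whereas you bound $\epsilon(G,v)$ uniformly and pass to the supremum, which sidesteps the (harmless) question of whether the supremum is attained.
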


\begin{proof} 
Choose a $v\in V^{G}\setminus\NGV$ such that
$\delta(G,V)=\epsilon(G,v)$. Clearly $v\in V^{H}\setminus\mathcal{N}_{H,V}$, hence
$\delta(G,V)=\epsilon(G,v)\ge\epsilon(H,v)\ge \epsilon(H,V^{H},V)=\epsilon(H,V)$. For the
second inequality, choose $v\in V\setminus\NGV$ such that
$\sigma(G,V)=\epsilon(G,v)$. As also $v\in V\setminus\mathcal{N}_{H,V}$, $\sigma(G,V)=\epsilon(G,v)\ge\epsilon(H,v)\ge\epsilon(H,V,V)=\tau(H,V)$.
\end{proof}

We believe a thorough investigation of the numbers $\epsilon(G,V)$ when $G$ is
a finite group may hold the key to proving Conjecture \ref{SsConj}. In order
to prove Proposition \ref{DeltaGL2}, we require only the corollary of the following lemma, whose proof is very similar to the proof of
\cite[Proposition~2.5]{ElmerKohlsSigmaDelta}, but the point of view is different. For any finite group $G$, let $V_{\reg,G}:=\kk G$ denote its
regular representation. 

\begin{Lemma}\label{EpsilonFree}  
Suppose $G$ is a finite group and $P$ a Sylow-$p$-subgroup of $G$. If
$V=V_{\reg,G}^{n}$ is a free $G$-module over $\kk$, then $\epsilon(G,v)=|P|$
for any $v\in V^{G}\setminus\{0\}$.
\end{Lemma}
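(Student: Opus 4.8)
The plan is to prove the two inequalities $\epsilon(G,v)\le |P|$ and $\epsilon(G,v)\ge |P|$ separately, after fixing convenient coordinates. Writing $V=V_{\reg,G}^{n}$, I would take the basis $\{e_{g,i}:g\in G,\ 1\le i\le n\}$ on which $G$ acts by $h\cdot e_{g,i}=e_{hg,i}$, together with the dual variables $x_{g,i}\in\kk[V]$, which $G$ then permutes by $h\cdot x_{g,i}=x_{hg,i}$. A direct computation of the fixed space shows $V^{G}=\bigoplus_{i=1}^{n}\kk\cdot\big(\sum_{g\in G}e_{g,i}\big)$, so any $v\in V^{G}$ satisfies $x_{g,i}(v)=\lambda_{i}$ for scalars $\lambda_{i}$ independent of $g$, and $v\ne 0$ forces $\lambda_{i_{0}}\ne 0$ for some index $i_{0}$. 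In other words, evaluation at $v$ is the ring map sending each $x_{g,i}\mapsto\lambda_{i}$, which is the structural fact the whole argument exploits.

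For the upper bound I would exhibit a $G$-invariant of degree exactly $|P|$ not vanishing at $v$. Starting from the monomial $\prod_{h\in P}x_{h,i_{0}}$ and summing over its $G$-orbit, one has $g\cdot\prod_{h\in P}x_{h,i_{0}}=\prod_{h\in gP}x_{h,i_{0}}$, so the orbit is indexed by the left cosets of $P$ and
\[
f:=\sum_{gP\in G/P}\ \prod_{h\in gP}x_{h,i_{0}}
\]
is $G$-invariant of degree $|P|$. Evaluating gives $f(v)=[G:P]\,\lambda_{i_{0}}^{|P|}$, which is nonzero because $p\nmid[G:P]$ (as $P$ is a Sylow $p$-subgroup) and $\lambda_{i_{0}}\ne 0$. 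Hence $\epsilon(G,v)\le|P|$.

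For the lower bound I would use only that any $f\in\kk[V]^{G}_{d}$ is in particular $P$-invariant. Since $P$ permutes the monomials of $\kk[V]$, I would write $f$ as a $\kk$-linear combination of the distinct $P$-orbit sums $O_{m}=\sum_{m'\in P\cdot m}m'$. The key structural point is that $P$ acts \emph{freely} on the variables by left translation, and that every monomial in a single $P$-orbit takes the same value at $v$ (the action only permutes the $g$-index, on which $\lambda_{i}$ does not depend). Therefore $O_{m}(v)=\big(|P|/|\Stab_{P}(m)|\big)\,m(v)$, a power of $p$ times $m(v)$, which vanishes in $\kk$ unless $m$ is fixed by all of $P$. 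A monomial is $P$-fixed precisely when its exponents are constant along the free $P$-orbits of variables, each of size $|P|$, which forces its degree to be a multiple of $|P|$. Consequently, for $0<d<|P|$ there is no $P$-fixed monomial of degree $d$, every surviving term vanishes, and $f(v)=0$; this gives $\epsilon(G,v)\ge|P|$ and completes the proof.

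The routine parts are the coordinate description of $V^{G}$ and the orbit bookkeeping for the upper bound. The main obstacle is the lower bound: one must resist attempting to describe all modular $P$-invariants explicitly, and instead exploit only the freeness of the permutation action together with the $p$-group property, so that both the multiplicities $|P|/|\Stab_{P}(m)|$ and the degrees of the $P$-fixed monomials are simultaneously controlled by $|P|$. This is also exactly where the two hypotheses — $V$ free and $P$ a Sylow $p$-subgroup — enter the argument.
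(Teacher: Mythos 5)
Your proof is correct and takes essentially the same route as the paper: both bounds come from decomposing an invariant into orbit sums of monomials, evaluating at the fixed point $v$ so that an orbit sum equals (orbit size)$\cdot m(v)$, and using the freeness of the translation action of $P$ on the variables $x_{g,i}$ to force any $P$-fixed monomial to have degree divisible by $|P|$, with the identical witness $O_G\left(\prod_{h \in P} x_{h,i_0}\right)$ for the upper bound. The only cosmetic difference is that you decompose into $P$-orbit sums from the outset (so non-$P$-fixed orbits vanish at $v$ because their size is a positive power of $p$), whereas the paper decomposes into $G$-orbit sums and deduces that $\Stab_G(m)$ contains a Sylow-$p$-subgroup, assumed without loss to be $P$ --- a harmless reshuffling of the same argument.
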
 

\begin{proof} 
For each $i=1,\ldots, n$ choose a permutation basis $\{v_{g,i}\mid g \in G\}$
of the $i$th summand (which is isomorphic to $V_{\reg,G}$), so that $\{v_{g,i}\mid g
\in G,\,\, i=1,\ldots,n\}$ is a basis of $V$. Let $\{x_{g,i}\mid g \in G,\,\,i = 1,\ldots, n\}$ be the basis dual to our
chosen basis of $V$, so that $\kk[V] = \kk[x_{g,i}: g \in G,\,\,
i=1,\ldots,n]$.
The fixed point space of the $i$th summand is spanned by $v_i:= \sum_{g
  \in G}v_{g,i}$, therefore $V^{G}=\langle v_{1},\ldots,v_{n}\rangle$. 
For a point $v=\sum_{i=1}^{n}\lambda_{i}v_{i}\in V^{G}\setminus\{0\}$ with scalars
$\lambda_{i}\in\kk$ (not all of them zero), we will show
$\epsilon(G,v)=|P|$. We show first $\epsilon(G,v)\ge|P|$, i.e.
$\deg(f)\ge|P|$ for any homogeneous $f \in \kk[V]_+^G$ such that $f(v) \neq
0$.  Since $V$ is a permutation module,  such an $f$ is a linear combination of orbit sums of monomials
\[O_G(m):= \sum_{m' \in G\cdot m} m',\] where $m$ is a monomial in
$\kk[V]_+$. It follows that there exists a monomial $m \in \kk[V]_+$, whose
degree is the same as $\deg(f)$, such that $O_G(m)(v) \neq 0$. Now if $m' \in G\cdot m$ then $m' = g\cdot m$ for some $g \in G$, and $m'(v) = (g \cdot m) (v) = m(g^{-1}v) = m(v)$ since $v \in V^G$. Therefore 
\[ 
O_G(m)(v) = |G\cdot m| m(v)=(G:\Stab_{G}(m))m(v)\neq 0.
\]
This implies that $\Stab_{G}(m)$ contains a Sylow-$p$-subgroup of $G$, which
without loss we can assume to be $P$.  Therefore, if
$x_{g,i}$ is any variable dividing $m$, then $m$ is also divisible by
$x_{g'g,i}$ for every $g' \in P$. In particular, since $m$ is not constant, we
obtain $\deg(f)=\deg(m) \geq |P|$ as required. Secondly, choose an $i$ such
that $\lambda_{i}\ne 0$ and define $m:=\prod_{g\in P}x_{g,i}$. Then $O_{G}(m)$
is an invariant of degree $|P|$ satisfying
\[
O_{G}(m)(v)=(G:\Stab_{G}(m))m(v)=(G:P)\lambda_{i}^{|P|}\ne 0,
\]
showing $\epsilon(G,v)\le|P|$.
\end{proof}

\begin{Corollary}\label{EpsilonProjective}  
Suppose $G$ is a finite group and $P$ a Sylow-$p$-subgroup of $G$. If
$U$ is a projective $G$-module over $\kk$, then $\epsilon(G,u)=|P|$
for any $u\in U^{G}\setminus\{0\}$. In particular, $\epsilon(G,U)=|P|$ if $U^{G}\ne\{0\}$.
\end{Corollary}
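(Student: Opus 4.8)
The plan is to reduce to Lemma~\ref{EpsilonFree} via the standard fact that a finitely generated projective module over a finite group algebra is a direct summand of a free module. Concretely, since $U$ is projective there are a $G$-module $W$ and an integer $n$ together with a $G$-module isomorphism $V:=V_{\reg,G}^{n}\cong U\oplus W$. I will identify $U$ with its image in $V$, so that the inclusion $\iota\colon U\hookrightarrow V$ and the projection $\pi\colon V\to U$ (with kernel $W$) are both $G$-equivariant linear maps satisfying $\pi\circ\iota=\id_{U}$.

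The crux of the argument is to show that the value $\epsilon(G,u)$ is insensitive to whether we compute it in $\kk[U]$ or in $\kk[V]$; I will temporarily write $\epsilon_{U}$ and $\epsilon_{V}$ to distinguish these. Pullback along the linear maps $\iota$ and $\pi$ gives graded algebra homomorphisms $\iota^{*}\colon\kk[V]\to\kk[U]$ and $\pi^{*}\colon\kk[U]\to\kk[V]$ which, because $\iota$ and $\pi$ are $G$-equivariant, carry homogeneous invariants to homogeneous invariants: a pulled-back homogeneous invariant is either zero or homogeneous of the same degree. Since $\pi\circ\iota=\id_{U}$ and $u\in U$, the functions $\iota^{*}f$ and $\pi^{*}g$ take the values $f(u)$ and $g(u)$ at $u$ respectively. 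Hence a homogeneous invariant of degree $d$ on $U$ not vanishing at $u$ produces, via $\pi^{*}$, one of degree $d$ on $V$ not vanishing at $u$, and conversely via $\iota^{*}$; this yields $\epsilon_{U}(G,u)=\epsilon_{V}(G,u)$.

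Now for $u\in U^{G}\setminus\{0\}$ we have $u\in V^{G}\setminus\{0\}$, so Lemma~\ref{EpsilonFree} gives $\epsilon_{V}(G,u)=|P|$, whence $\epsilon(G,u)=\epsilon_{U}(G,u)=|P|$. For the final assertion, observe that $\epsilon(G,u)=|P|<\infty$ forces $u\notin\mathcal{N}_{G,U}$, so that $U^{G}\setminus\mathcal{N}_{G,U}=U^{G}\setminus\{0\}$; when $U^{G}\neq\{0\}$ this set is nonempty and $\epsilon(G,U)=\inf\{\epsilon(G,u)\mid u\in U^{G}\setminus\{0\}\}=|P|$. The only point needing any care is the degree-preservation and invariance under $\iota^{*}$ and $\pi^{*}$, but this is routine; no genuine obstacle arises, since the representation-theoretic input (projective equals summand of free) and the invariant-theoretic input (Lemma~\ref{EpsilonFree}) are both already in hand.
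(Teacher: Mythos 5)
Your proof is correct and follows essentially the same route as the paper: both realise $U$ as a direct summand of a free module $V=V_{\reg,G}^{n}$ and transfer homogeneous invariants of equal degree between $\kk[U]$ and $\kk[V]$ (your $\iota^{*}$ is the paper's restriction $f|_{U}$, and your $\pi^{*}$ is its algebra inclusion $\kk[U]\subseteq\kk[V]$), before invoking Lemma~\ref{EpsilonFree}. The only differences are cosmetic: you make the equivariant pullbacks explicit and spell out the final deduction that $U^{G}\setminus\mathcal{N}_{G,U}=U^{G}\setminus\{0\}$, which the paper leaves implicit.
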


\begin{proof}
By definition, $U$ is a direct summand of a free module, i.e. there exists (up to
isomorphism) a
decomposition $U\oplus W=V=V_{\reg,G}^{n}$ of some free module $V$ into $U$
and a $G$-module complement $W$. Take $u\in U^{G}\setminus\{0\}$. As $u\in
V^{G}\setminus\{0\}$, by the previous lemma there is  an $f\in\kk[V]_{|P|}^{G}$
satisfying $f(u)\ne0$. As $f|_{U}\in \kk[U]^{G}_{|P|}$ satisfies
$f|_{U}(u)=f(u)\ne 0$, we have $\epsilon(G,u)\le|P|$. On the other hand, as we have
an algebra-inclusion $\kk[U]\subseteq \kk[V]$, any homogeneous
$f\in\kk[U]^{G}_{+}$ satisfying $f(u)\ne 0$ can be considered as an element of
$\kk[V]^{G}$, hence $\deg(f)\ge |P|$ by the same lemma, so $\epsilon(G,u)\ge|P|$.
\end{proof}

It is worth recalling that for a $p$-group, ``projective'' and ``free'' means the
same for a module. It is now clear how our proof should proceed - we need to find a
sequence of finite ($p$-)subgroups $H$ of $G = \GL_2(\kk)$ and $G$-modules $V$
which become projective (free) on restriction to $H$. The following result provides a good source of such modules.

\begin{Prop}\label{ProjectiveModules} Let $p>0$ be a prime and let $\kk$ be an
  algebraically closed field of characteristic $p$. Let $G_n =
  (\mathbb{F}_{p^n},+)$ be the additive group of the finite subfield
  $\FF_{p^{n}}$ of $\kk$. Let $V$ be the $G_n$-module spanned by vectors $X$
  and $Y$ such that the action $*$ of $G_n$ on $V$ is given by
\begin{eqnarray*}
{t} *  X &=& X \\
\text{ and }\quad {t} * Y &=& Y + {t} X\quad\quad\text{ for all }t\in G_{n}.
\end{eqnarray*}
Then $S^{p^n-1}(V)$ is isomorphic to the regular representation of $G_{n}$.
\end{Prop}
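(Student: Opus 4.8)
The first move is a dimension count that reduces the problem to a single linear-algebra statement. Since $\dim_\kk V = 2$, the symmetric power $S^{p^n-1}(V)$ has $\kk$-basis $\{X^{i}Y^{p^n-1-i}\mid 0\le i\le p^n-1\}$ and hence $\dim_\kk S^{p^n-1}(V) = p^n = |G_n|$, matching $\dim_\kk \kk G_n$. I therefore aim to exhibit a \emph{free generator}: an element $w\in S^{p^n-1}(V)$ whose $G_n$-translates $\{t* w\mid t\in G_n\}$ are $\kk$-linearly independent. Once this is achieved, these $p^n$ translates form a basis, and the $\kk$-linear map $\kk G_n\to S^{p^n-1}(V)$ sending the group element $t$ to $t* w$ is visibly $G_n$-equivariant (because $s*(t* w)=(s+t)* w$ matches the left regular action $s\cdot t = s+t$) and bijective on a basis, hence a $G_n$-module isomorphism onto $S^{p^n-1}(V)$. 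This identifies $S^{p^n-1}(V)$ with the regular representation.

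The candidate I would test is $w := Y^{p^n-1}$. Extending the action multiplicatively to $S(V)=\kk[X,Y]$, one has $t* Y = Y+tX$, so
\[
t* Y^{p^n-1} = (Y+tX)^{p^n-1} = \sum_{i=0}^{p^n-1}\binom{p^n-1}{i}\,t^{i}\,X^{i}Y^{p^n-1-i}.
\]
Reading off coordinates in the monomial basis above, the linear independence of $\{t* w\mid t\in G_n\}$ is exactly the nonsingularity of the $p^n\times p^n$ matrix $M$ with rows indexed by $t\in\FF_{p^n}$ and columns by $i=0,\dots,p^n-1$, given by $M_{t,i}=\binom{p^n-1}{i}\,t^{i}$. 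The key observation is that $M$ factors as $M = W D$, where $W_{t,i}=t^{i}$ is a square Vandermonde matrix on the nodes $\{t\in\FF_{p^n}\}$ and $D=\diag\bigl(\binom{p^n-1}{i}\bigr)_{i}$ is diagonal.

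It then remains to check that each factor is invertible. The matrix $W$ is nonsingular because its nodes are the $p^n$ \emph{distinct} elements of $\FF_{p^n}$, so its determinant $\prod_{t<t'}(t'-t)$ is nonzero. The matrix $D$ is nonsingular precisely because every $\binom{p^n-1}{i}$ is nonzero in $\kk$, and this is the crux of the whole argument (and the reason the exponent $p^n-1$ is exactly right): writing $p^n-1$ in base $p$ gives the digit string $(p-1,\dots,p-1)$, so for any $0\le i\le p^n-1$ each base-$p$ digit $i_j$ of $i$ satisfies $i_j\le p-1$, and Lucas' theorem yields $\binom{p^n-1}{i}\equiv\prod_j\binom{p-1}{i_j}\equiv(-1)^{\,i_0+\cdots+i_{n-1}}\not\equiv 0\pmod p$. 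Hence $M=WD$ is invertible, the translates of $w$ form a basis, and the reduction in the first paragraph completes the proof. I expect the only genuine obstacle to be this last nonvanishing step; the factorisation into Vandermonde times diagonal and the subsequent bookkeeping are routine, but without the Lucas-type computation one would not see why degree $p^n-1$ (rather than some other degree) produces a free module.
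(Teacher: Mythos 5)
Your proposal is correct and follows essentially the same route as the paper: both take $Y^{p^n-1}$ as the candidate free generator, expand $t*Y^{p^n-1}=(Y+tX)^{p^n-1}$ by the binomial theorem, and deduce linear independence of the $p^n$ translates from the nonsingularity of a Vandermonde matrix on the distinct elements of $\FF_{p^n}$. The only difference is cosmetic: where you factor the coefficient matrix as Vandermonde times the diagonal of binomial coefficients and invoke Lucas' theorem, the paper proves directly the sharper congruence $\binom{p^n-1}{k}\equiv(-1)^k\pmod p$ (its Lemma \ref{NumberTheory}) and absorbs the signs into the Vandermonde nodes $-t$.
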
 

\begin{proof} 
We will show that $S:=\{{t} * Y^{p^n-1}\mid {t} \in G_n\}$ is a basis of
$S^{p^n-1}(V)$, which clearly implies $S^{p^{n}-1}(V)\cong V_{\reg,G_n}$.  As $|G_{n}|=p^{n}$
equals the dimension of $S^{p^n-1}(V)$, it is enough to show that the
$p^{n}\times p^{n}$ matrix $A$ with columns formed by the coordinate vectors of the elements
${t} * Y^{p^n-1},\, t\in G_{n}$ with respect to the standard basis
$\{Y^{p^n-1-i}X^{i}\mid i \in \{0,\ldots,p^{n}-1\}\}$ of $S^{p^{n}-1}(V)$ has a nonzero determinant.
Using the binomial theorem and Lemma \ref{NumberTheory} we compute
\begin{eqnarray*}
{t} * Y^{p^n-1} &= &(Y+ {t} X)^{p^n-1} = \sum_{i = 0}^{p^n-1} {p^n-1 \choose
  i} Y^{p^n-1-i}(t X)^{i}\\
&\stackrel{\text{L. } \ref{NumberTheory}}{=} &\sum_{i = 0}^{p^n-1} (-1)^{i}Y^{p^n-1-i}(tX)^{i} = \sum_{i = 0}^{p^n-1}
(-{t})^{i}Y^{p^n-1-i}X^{i}.
\end{eqnarray*}
Thus, $A=((-t)^{i})_{i\in\{0,\ldots,p^{n}-1\},t\in G_{n}}\in \kk^{p^{n}\times
  p^{n}}$, where we enumerated the $p^{n}$ columns of $A$ by the set $G_{n}$
-- which is harmless as the order of the columns only affects the sign of the determinant of
$A$. Note that $A$ is the $p^{n}\times p^{n}$ Vandermonde matrix of the
$p^{n}$ \emph{different} elements of $-G_{n}(=G_{n})$, hence $\det(A)\ne 0$,
which proves the claim.
\end{proof}

In the preceding proof we used the following number-theoretic lemma, of which
we provide a proof for the convenience of the reader.
\begin{Lemma}\label{NumberTheory} Let $p$ be a prime number and $0\le k \leq
  p^n-1$. Then 
\[
{p^n-1 \choose k}\equiv (-1)^{k}\mod p.
\]
\end{Lemma}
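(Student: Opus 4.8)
The plan is to work inside the polynomial ring $\mathbb{F}_p[x]$ and read the congruence off a comparison of coefficients, using the Frobenius identity as the single nontrivial input.

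First I would recall that in $\mathbb{F}_p[x]$ one has $(1+x)^{p^n} = 1 + x^{p^n}$; this is the Frobenius endomorphism applied $n$ times, equivalently the statement that $p$ divides $\binom{p^n}{j}$ for every $0 < j < p^n$. Next, over $\mathbb{Z}[x]$ the finite geometric series (with ratio $-x$) gives the exact identity
\[
(1+x)\sum_{i=0}^{p^n-1}(-1)^i x^i = 1 - (-x)^{p^n}.
\]
Reducing this modulo $p$ and noting that $(-x)^{p^n} = (-1)^{p^n} x^{p^n} \equiv -x^{p^n} \pmod p$ in every characteristic (for odd $p$ because $p^n$ is odd, and for $p=2$ because $-1 = 1$ there), the right-hand side becomes $1 + x^{p^n} = (1+x)^{p^n}$ in $\mathbb{F}_p[x]$.

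Thus in $\mathbb{F}_p[x]$ we obtain $(1+x)\sum_{i=0}^{p^n-1}(-1)^i x^i = (1+x)\,(1+x)^{p^n-1}$. Since $\mathbb{F}_p[x]$ is an integral domain and $1+x \neq 0$, I would cancel the factor $1+x$ to get $(1+x)^{p^n-1} = \sum_{i=0}^{p^n-1}(-1)^i x^i$ in $\mathbb{F}_p[x]$. Comparing the coefficient of $x^k$ on both sides then yields $\binom{p^n-1}{k} \equiv (-1)^k \pmod p$ for each $0 \le k \le p^n-1$, as claimed.

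The only delicate point is the uniform handling of the sign $(-1)^{p^n}$ across characteristics; once that is dispatched the argument is purely formal. Alternatively one can bypass generating functions and induct on $k$: the base case $\binom{p^n-1}{0}=1$ is immediate, and Pascal's rule together with $p \mid \binom{p^n}{k}$ for $0<k<p^n$ gives $\binom{p^n-1}{k} \equiv -\binom{p^n-1}{k-1} \pmod p$, so the sign alternates. Either route isolates the divisibility $p \mid \binom{p^n}{k}$ (equivalently the Frobenius identity) as the essential ingredient, which I expect to be the only step requiring any real content.
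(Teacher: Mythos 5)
Your proof is correct, and it takes a genuinely different route from the paper. The paper argues directly on the product formula $\binom{p^n-1}{k}=\prod_{m=1}^{k}\frac{p^n-m}{m}$: for each factor it writes $m=p^r s$ with $p\nmid s$ (so $r<n$), cancels the common power of $p$ to get $\frac{p^{n-r}-s}{s}$, and observes this equals $-1$ in $\mathbb{Z}/p\mathbb{Z}$; the congruence then falls out factor by factor. Your argument instead isolates the divisibility $p\mid\binom{p^n}{j}$ for $0<j<p^n$ (equivalently $(1+x)^{p^n}=1+x^{p^n}$ in $\mathbb{F}_p[x]$) as the sole input, combines it with the geometric series identity $(1+x)\sum_{i=0}^{p^n-1}(-1)^i x^i = 1-(-x)^{p^n}$, handles the sign $(-1)^{p^n}$ uniformly (correctly, including $p=2$ where $-1=1$), and cancels $1+x$ in the domain $\mathbb{F}_p[x]$ before comparing coefficients; your Pascal-rule induction is an equally valid second route resting on the same divisibility. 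What each buys: the paper's proof is self-contained elementary arithmetic with no generating functions, at the cost of a slightly delicate bookkeeping step about $p$-adic valuations of the numerators and denominators (ensuring the reduced fractions have denominator prime to $p$); yours is purely formal once the Frobenius identity is granted, avoids all fraction manipulation, and makes the logical dependence of the lemma on a single standard fact transparent.
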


\begin{proof}
We have ${p^n-1 \choose k}=\prod_{m=1}^{k}\frac{p^{n}-m}{m}$. We show that
the reduced fraction of each factor has a denominator coprime to $p$, and
equals $-1+p\Z$ if computed in the field $\ZZ/p\ZZ$. For this sake, for  $1\le m\le
k\le p^{n}-1$, write $m=p^{r}s$ where $s$ and $p$ are coprime. Then $r<n$, and
$\frac{p^{n}-m}{m}=\frac{p^{n}-p^{r}s}{p^{r}s}=\frac{p^{n-r}-s}{s}$. In the
field $\ZZ/p\ZZ$, the last fraction equals $-1$.
\end{proof}

Having set up all the necessary machinery, we are now in a position to prove
Theorem \ref{DeltaGL2}. Let $G$ be a subgroup of $\GL_2(\kk)$
containing an infinite unipotent subgroup $U$. As $U$ is conjugate in
$\GL_{2}(\kk)$ to the subgroup of unipotent upper triangular $2\times 2$ matrices
(see \cite[Corollary 17.5]{Humphreys}), we can replace $G$ by a conjugate subgroup
and assume $U=\left\{ u_{t}\mid t \in\kk\right\}$, where
  $u_{t}=\tiny\left(\begin{array}{cc}1&t\\&1\end{array}\right)\in G$.
Note that $U$ is ismorphic to the additive group of the ground field $\Ga=(\kk,+)$. Let
$V$ denote the restriction of the natural $2$-dimensional $\GL_2(\kk)$-module to
$G$. We may choose a basis  $\{X,Y\}$ of $V$ such that 
\[
 u_{t} * X = X \quad\text{ and }\quad u_{t}* Y = Y+tX \quad \text{ for all } t\in\Ga.
\]  
Theorem \ref{DeltaGL2} follows immediately from the following:
\begin{Prop} 
For any integer $n$ set $V_n: =
\Hom_{\kk}(S^{p^n-1}(V),S^{p^n-1}(V))$. Then $\delta(G,V_n) \geq p^n.$ 
\end{Prop}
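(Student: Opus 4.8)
```latex
The plan is to exhibit a finite $p$-subgroup $H$ of $G$ such that the restriction of $V_n$ to $H$ is a projective (equivalently free) $H$-module, and such that $V_n^G$ meets $V_n \setminus \NGV$ nontrivially. Then Corollary \ref{EpsilonProjective} gives $\epsilon(H,u) = |P_H| = |H|$ for any $u \in V_n^H \setminus \{0\}$, and Lemma \ref{RelativeEpsilon} upgrades this to a lower bound $\delta(G,V_n) \geq \epsilon(H,V_n)$ on the full group. The natural choice is $H = G_n = (\FF_{p^n},+) \subseteq U \subseteq G$, the finite subgroup consisting of the $u_t$ with $t \in \FF_{p^n}$; this is a $p$-group of order $p^n$, so projective and free coincide, and the target value $|G_n| = p^n$ matches the bound claimed in the proposition.

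The first key step is to identify the $G_n$-module structure on $V_n$. Write $W := S^{p^n-1}(V)$. By Proposition \ref{ProjectiveModules}, $W \cong V_{\reg,G_n}$ as a $G_n$-module, so $W$ is free of rank one over the group algebra $\kk G_n$. Now $V_n = \Hom_\kk(W,W) \cong W^* \otimes_\kk W$ as a $G_n$-module. The crucial homological input is that the tensor product of \emph{any} $\kk G_n$-module with a free (hence projective) $\kk G_n$-module is again free: since $\kk G_n$ is a group algebra, $M \otimes_\kk \kk G_n \cong M_0 \otimes_\kk \kk G_n$ where $M_0$ is $M$ with trivial action, and the latter is free of rank $\dim_\kk M$. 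Applying this with $M = W^*$ and using $W \cong \kk G_n$ shows $V_n \cong W^* \otimes_\kk W$ is a free $\kk G_n$-module, hence projective on restriction to $G_n$. This is the step I expect to carry the most weight, since it is where the special structure from Proposition \ref{ProjectiveModules} gets leveraged.

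The second step is to check that $V_n^{G} \setminus \NGV \neq \emptyset$ and that $V_n^{G_n} \neq \{0\}$, so that both the hypothesis of Lemma \ref{RelativeEpsilon} and that of Corollary \ref{EpsilonProjective} are met. The identity map $\id_W \in \Hom_\kk(W,W) = V_n$ is fixed by all of $G$ (indeed by any group acting on $W$), so $\id_W \in V_n^G$ and in particular $V_n^{G_n} \neq \{0\}$. For the nullcone, one checks that $\id_W$ does not lie in $\NGV[V_n]$: the trace function $\Hom_\kk(W,W) \to \kk$, $\phi \mapsto \operatorname{tr}(\phi)$, is a degree-one $G$-invariant linear form with $\operatorname{tr}(\id_W) = \dim_\kk W = p^n \neq 0$ in $\kk$ only if $p \nmid p^n$, which fails. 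So trace alone is insufficient; instead I would use a higher symmetric function of $\id_W$, or more simply argue that since $\id_W \in V_n^{G_n} \setminus \{0\}$ and $V_n|_{G_n}$ is projective, Corollary \ref{EpsilonProjective} already produces a homogeneous $G_n$-invariant $f$ of degree $p^n$ with $f(\id_W) \neq 0$; viewing the corresponding orbit construction over $G$ (or simply noting $\epsilon(G_n, \id_W) = p^n < \infty$ forces $\id_W \notin \NGV[V_n]$, since any point of the nullcone is killed by all positive-degree invariants and hence by all positive-degree $G_n$-invariants) shows $\id_W \in V_n^G \setminus \NGV[V_n]$.

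Finally, assembling these pieces, Lemma \ref{RelativeEpsilon} (with $H = G_n$) gives
\[
\delta(G,V_n) \;\geq\; \epsilon(G_n, V_n) \;=\; |G_n| \;=\; p^n,
\]
where the middle equality is Corollary \ref{EpsilonProjective} applied to the projective $G_n$-module $V_n$ with $V_n^{G_n} \neq \{0\}$. Taking $n \to \infty$ in Theorem \ref{DeltaGL2} then yields $\delta(G) = \infty$. The only genuine subtlety, as noted, is the freeness argument for $W^* \otimes_\kk W$ over $\kk G_n$; everything downstream is a direct application of the machinery already established in the excerpt.
```
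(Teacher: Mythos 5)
Your overall strategy --- restrict to the finite subgroup $G_n=\{u_t\mid t\in\FF_{p^n}\}$, use Proposition \ref{ProjectiveModules} together with the untwisting isomorphism $M\otimes_\kk \kk G_n\cong M_0\otimes_\kk \kk G_n$ to see that $V_n\cong W^*\otimes_\kk W$ is free over $\kk G_n$, then combine Lemma \ref{RelativeEpsilon} with Corollary \ref{EpsilonProjective} to get $\delta(G,V_n)\ge\epsilon(G_n,V_n)=p^n$ --- is exactly the paper's proof. However, there is a genuine gap in your verification that $\id_W\in V_n^G\setminus\mathcal{N}_{G,V_n}$, and this is not a side condition: it is the hypothesis of Lemma \ref{RelativeEpsilon}, and without it $\delta(G,V_n)$ would be $0$ by definition. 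Your fallback argument claims that $\epsilon(G_n,\id_W)=p^n<\infty$ forces $\id_W\notin\mathcal{N}_{G,V_n}$ ``since any point of the nullcone is killed by all positive-degree invariants and hence by all positive-degree $G_n$-invariants.'' The implication runs the wrong way: $\kk[V_n]^G_+\subseteq\kk[V_n]^{G_n}_+$, hence $\mathcal{V}\bigl(\kk[V_n]^{G_n}_+\bigr)\subseteq\mathcal{V}\bigl(\kk[V_n]^G_+\bigr)$, i.e.\ $\mathcal{N}_{G_n,V_n}\subseteq\mathcal{N}_{G,V_n}$. A point of the $G$-nullcone kills all positive-degree $G$-invariants but may perfectly well be detected by a $G_n$-invariant that is not $G$-invariant; so the degree-$p^n$ invariant supplied by Corollary \ref{EpsilonProjective} only certifies $\id_W\notin\mathcal{N}_{G_n,V_n}$, the \emph{smaller} set. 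Your parenthetical alternative of ``viewing the corresponding orbit construction over $G$'' does not rescue this either: $G$ is infinite (and in characteristic $p$ even $\Ga$ admits no Reynolds operator), so there is no averaging of a $G_n$-invariant into a $G$-invariant.

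The repair is the one you half-spotted and then abandoned. After correctly observing that the trace fails (since $\operatorname{tr}(\id_W)=\dim_\kk W=p^n=0$ in $\kk$), you suggest ``a higher symmetric function of $\id_W$'' --- and the paper uses precisely the top one, the determinant. The map $\det\colon V_n=\Hom_\kk(W,W)\to\kk$ is a homogeneous polynomial of degree $p^n>0$, and it is $G$-invariant because $G$ acts on $V_n$ by conjugation, $\phi\mapsto\rho(g)\,\phi\,\rho(g)^{-1}$, under which $\det$ is unchanged; since $\det(\id_W)=1\ne 0$, this single invariant yields $\id_W\in V_n^G\setminus\mathcal{N}_{G,V_n}$, after which the rest of your argument goes through verbatim. (A minor further point: your closing sentence about ``taking $n\to\infty$ in Theorem \ref{DeltaGL2}'' is backwards --- the proposition, for all $n$, is what \emph{proves} that theorem --- but this lies outside the statement being proved.)
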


\begin{proof} 
First note that $V_n^G \setminus \mathcal{N}_{G,V_n} \neq \emptyset$: to see
this consider the identity homomorphism $\id: S^{p^n-1}(V) \rightarrow
S^{p^n-1}(V)$, which is an element of $V_n^G$. The determinant map
$\det: V_n \rightarrow \kk$ is an element of $\kk[V_n]^G$, and $\det(\id) =
1\ne 0$,
so $\id \in V_{n}^{G}\setminus \mathcal{N}_{G,V_n}$. Therefore we can apply
Lemma \ref{RelativeEpsilon} to $G$ and its finite subgroup $U_n:=\{u_{t}\mid
t\in \FF_{p^{n}}\}$, hence $\delta(G,V_{n})\ge \epsilon(U_{n},V_{n})$. Note
that $U_{n}\cong(\FF_{p^{n}},+)$. By
Proposition \ref{ProjectiveModules}, $S^{p^n-1}(V)$ is a free
$U_n$-module. Recall that tensoring a free/projective module with any
other module yields again a free/projective module, hence
$V_{n}=\Hom_{\kk}(S^{p^n-1}(V),S^{p^n-1}(V))\cong S^{p^n-1}(V)\otimes (S^{p^n-1}(V))^{*}$
 is also a free $U_n$-module. Using Corollary \ref{EpsilonProjective} we obtain
\[
\delta(G,V_n)\ge\epsilon(U_n,V_n)= |U_{n}|=p^{n} 
\] 
as required.
\end{proof}

We record the following observation for later use:
\begin{Corollary}\label{DeltaUnipotent} 
Let $G$ be an infinite connected unipotent algebraic group over an
algebraically closed field of positive characteristic. Then $\delta(G) = \infty$.
\end{Corollary}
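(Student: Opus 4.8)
The plan is to reduce everything to the single case $G=\Ga$, which is already covered by Theorem~\ref{DeltaGL2}, and then to transport infiniteness of $\delta$ across a quotient using the monotonicity recorded in Remark~\ref{DeltaQuotient}. The two ingredients are thus (i) $\delta(\Ga)=\infty$ and (ii) the fact that every infinite connected unipotent group has $\Ga$ as a quotient.

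For ingredient (i), I would note that $\Ga=(\kk,+)$ embeds as a closed subgroup of $\GL_2(\kk)$ as the group of unipotent upper-triangular matrices $\left\{\left(\begin{smallmatrix}1&t\\0&1\end{smallmatrix}\right)\mid t\in\kk\right\}$, and that this image is itself an infinite unipotent subgroup. Hence Theorem~\ref{DeltaGL2} applies directly with $G=\Ga$ and yields $\delta(\Ga)=\infty$. (Equivalently, the modules $V_n$ constructed from the natural two-dimensional module in the proof of Theorem~\ref{DeltaGL2} already witness $\delta(\Ga,V_n)\ge p^n$ for every $n$.) For ingredient (ii), I would invoke the standard structure theory of connected unipotent groups over a perfect field: since $\kk$ is algebraically closed and $G$ is connected, unipotent and of positive dimension (an infinite connected algebraic group over an algebraically closed field is precisely a positive-dimensional one), $G$ admits a subnormal series $G=G_0\supset G_1\supset\cdots\supset G_m=\{e\}$ of closed subgroups with $G_{i+1}$ normal in $G_i$ and each successive quotient $G_i/G_{i+1}\cong\Ga$. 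Reading off the top of this series, $N:=G_1$ is a closed normal subgroup of $G$ with $G/N\cong\Ga$.

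Combining the two ingredients with Remark~\ref{DeltaQuotient} then gives
\[
\delta(G)\ge\delta(G/N)=\delta(\Ga)=\infty,
\]
which is the claim. The only input beyond what is already established in the excerpt is the existence of the $\Ga$-quotient in ingredient (ii), and the only point demanding care there is the hypothesis on the ground field: the $\Ga$-filtration of a connected unipotent group requires $\kk$ to be perfect, which holds automatically here because $\kk$ is algebraically closed. Consequently there is no genuine obstacle; the proof is essentially a one-line reduction once the structure theorem is cited and $\delta(\Ga)=\infty$ is observed.
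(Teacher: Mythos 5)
Your proof is correct and follows essentially the same route as the paper's: the paper likewise takes a closed normal subgroup $N$ with $G/N\cong\Ga$ (citing this as well known, where you spell out the standard $\Ga$-filtration of a connected unipotent group over a perfect field), and then combines Remark~\ref{DeltaQuotient} with $\delta(\Ga)=\infty$ from Theorem~\ref{DeltaGL2} to get $\delta(G)\ge\delta(G/N)=\infty$. No gaps; your extra care about perfectness of $\kk$ is harmless and accurate.
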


\begin{proof} 
It is well-known that such a group $G$ contains a closed normal subgroup $N$
such that $G/N \cong\Ga$. We can embed $\Ga$ in $\GL_2(\kk)$ as above. Now using Remark \ref{DeltaQuotient} and Theorem \ref{DeltaGL2}, we have $\delta(G) \geq \delta(G/N)=\delta(\Ga) = \infty.$
\end{proof}

Combining Theorem \ref{DeltaGL2} and Proposition
\ref{DeltaNormalSubgroupFiniteIndex} leads to more examples of groups with
infinite $\delta$-value: Whenever $\delta(G)=\infty$ and $N$ is a closed normal
subgroup of $G$ such that $G/N$ is reductive, either $\delta(N)=\infty$ or
$\delta(G/N)=\infty$. 

\begin{eg}
Take $G=\GL_{2}(\kk)$ and consider its centre $Z(G)=\{a I_{2}\mid a\in\kk\setminus\{0\}\}$. As a
torus, $Z(G)$ is linearly reductive, hence $\delta(Z(G))=1$. Therefore,
$\delta(\operatorname{PGL}_{2}(\kk))=\delta(G/Z(G))=\infty$. Note also that
$\delta(\operatorname{PSL}_{2}(\kk))=\infty$, because over an algebraically
closed field, we have $\operatorname{PSL}_{2}(\kk)\cong\operatorname{PGL}_{2}(\kk)$.
\end{eg}

\section{The $\sigma$-number of infinite groups}\label{SigmaSection}

In this section we prove Theorems \ref{SigmaCharP} and \ref{SigmaChar0}. Some of the groundwork was done in \cite{ElmerKohlsSigmaDelta}. In particular, we recall the following result:

\begin{Prop}\label{sigmaGG0}\cite[Corollary~3.13]{ElmerKohlsSigmaDelta}
Let $G$ be a linear algebraic group, with $G^0$ the connected component of $G$ containing the identity. We have the inequalities
\[
\sigma(G^{0})\le\sigma(G)\le(G:G^{0})\sigma(G^{0}).
\]
In particular, $\sigma(G)$ and $\sigma(G^{0})$ are either both
finite or infinite.
\end{Prop}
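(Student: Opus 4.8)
The plan is to prove the two inequalities separately, working throughout with the characterisation $\sigma(G,V)=\sup\{\epsilon(G,v)\mid v\in V\setminus\NGV\}$ recorded above (with the convention that this supremum is $0$ when $V\setminus\NGV=\emptyset$). Write $k:=(G:G^{0})$, which is finite, fix coset representatives $e=\sigma_{1},\dots,\sigma_{k}$ of $G^{0}$ in $G$, and recall that $G^{0}$ is normal in $G$, so $\Gamma:=G/G^{0}$ acts on $\kk[V]^{G^{0}}$ for every $G$-module $V$.

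For the upper bound $\sigma(G)\le k\,\sigma(G^{0})$, I would fix a $G$-module $V$ and a point $v\in V\setminus\NGV$. Since $\kk[V]^{G}\subseteq\kk[V]^{G^{0}}$ we have $\mathcal{N}_{G^{0},V}\subseteq\NGV$, so $v\notin\mathcal{N}_{G^{0},V}$ and there is a homogeneous $g\in\kk[V]^{G^{0}}$ of degree $e:=\epsilon(G^{0},v)\le\sigma(G^{0})$ with $g(v)\ne0$. Because $G^{0}$ is normal, each $\sigma_{i}\cdot g$ is again $G^{0}$-invariant and depends only on the coset $\sigma_{i}G^{0}$, and $\Gamma$ permutes the $k$ polynomials $\sigma_{1}\cdot g,\dots,\sigma_{k}\cdot g$. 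Hence the elementary symmetric polynomials $\mathrm{e}_{j}(\sigma_{1}\cdot g,\dots,\sigma_{k}\cdot g)$ are $G$-invariant and homogeneous of degree $je$. Since $(\sigma_{1}\cdot g)(v)=g(v)\ne0$, the one-variable polynomial $\prod_{i=1}^{k}\bigl(T-(\sigma_{i}\cdot g)(v)\bigr)$ is not $T^{k}$, so some $\mathrm{e}_{j}$ with $1\le j\le k$ does not vanish at $v$; this yields $\epsilon(G,v)\le je\le k\,\sigma(G^{0})$, and taking suprema over $v$ and then over $V$ gives the claim.

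The lower bound $\sigma(G^{0})\le\sigma(G)$ is the more delicate direction, and here I would exhibit, for an arbitrary $G^{0}$-module $W$, a $G$-module $V$ with $\sigma(G^{0},W)\le\sigma(G,V)$. Take $V:=\Ind_{G^{0}}^{G}W=\bigoplus_{i=1}^{k}\sigma_{i}\otimes W$, with $G$ acting by permuting the summands and twisting by the relevant element of $G^{0}$; since $k<\infty$ and $G$ is the disjoint union of its (clopen) cosets, this action is rational, so $V$ is a genuine $G$-module. The key point is that the inclusion $\iota\colon W\to V$, $w\mapsto 1\otimes w$, onto the $e$-summand is $G^{0}$-equivariant. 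Given any $w\in W\setminus\mathcal{N}_{G^{0},W}$, I would set $v_{w}:=\iota(w)$ and establish two facts. First, $v_{w}\notin\NGV$: choosing homogeneous $g\in\kk[W]^{G^{0}}_{+}$ with $g(w)\ne0$, the transfer $F:=\sum_{i=1}^{k}g\circ\pi_{i}$ (where $\pi_{i}\colon V\to W$ is the projection onto the $i$-th coordinate) is $G$-invariant, because $g$ is $G^{0}$-invariant and $G$ merely permutes-and-twists the coordinates, and $F(v_{w})=g(w)\ne0$ since all coordinates of $v_{w}$ except the first vanish. Second, $\epsilon(G,v_{w})\ge\epsilon(G^{0},w)$: any homogeneous $F\in\kk[V]^{G}$ with $F(v_{w})\ne0$ restricts along the $G^{0}$-equivariant linear map $\iota$ to a homogeneous invariant $F\circ\iota\in\kk[W]^{G^{0}}$ of the same positive degree, with $(F\circ\iota)(w)=F(v_{w})\ne0$.

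Combining the two facts gives $\sigma(G,V)\ge\sup_{w}\epsilon(G^{0},w)=\sigma(G^{0},W)$, and since $V$ is a $G$-module we conclude $\sigma(G^{0},W)\le\sigma(G)$; taking the supremum over all $W$ yields $\sigma(G^{0})\le\sigma(G)$. The final ``in particular'' assertion is then immediate from $\sigma(G^{0})\le\sigma(G)\le k\,\sigma(G^{0})$ together with $k=(G:G^{0})<\infty$. I expect the main obstacle to be the lower bound: one must simultaneously keep the witnessing point $v_{w}$ outside the nullcone (achieved by the transfer/orbit-sum $F$, which works precisely because it is a \emph{sum} and not a product, so it survives on a point with a single nonzero coordinate) and prevent any low-degree $G$-invariant from detecting it (achieved by restriction to the embedded copy of $W$); verifying the rationality of $\Ind_{G^{0}}^{G}W$ and the $G$-invariance of the transfer are the points needing the most care.
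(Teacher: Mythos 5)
Your proof is correct, and both halves are sound: for the upper bound you rightly use the elementary symmetric functions of the coset translates $\sigma_{1}\cdot g,\dots,\sigma_{k}\cdot g$ (rather than their product, which could vanish at $v$), and for the lower bound the induced module $\Ind_{G^{0}}^{G}W$ with the transfer $\sum_{i}g\circ\pi_{i}$ to keep $\iota(w)$ outside $\NGV$ and restriction along the $G^{0}$-equivariant embedding to force the degree bound. Note that this paper gives no proof at all --- the proposition is imported verbatim from the cited reference \cite[Corollary~3.13]{ElmerKohlsSigmaDelta} --- and your argument is essentially the standard finite-index argument used there, so this matches the intended proof rather than offering a genuinely different route.
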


The following proposition is key to the proofs:
\begin{Prop}\label{SigmaSemisimple}
Let $G$ be a linear algebraic group over a field $\kk$ of arbitrary
characteristic. Suppose $G$ contains a non-trivial torus. Then $\sigma(G) = \infty$.
\end{Prop}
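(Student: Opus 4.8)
The plan is to reduce to a single well-chosen torus $T \cong \Gm$ inside $G$ and produce a $G$-module $V$ on which $T$ acts with arbitrarily high weights, so that any invariant separating a suitable point from zero must have large degree. Concretely, I would first use Proposition \ref{sigmaGG0} (or the monotonicity $\sigma(T) \le \sigma(G)$ that follows from restricting, together with Lemma \ref{RelativeEpsilon}) to reduce the problem to showing $\sigma(\Gm) = \infty$, where $\Gm = (\kk^{*}, \cdot)$ is the one-dimensional torus sitting inside $G$. Since $\sigma(G) \ge \tau(T,V) \ge \epsilon(T,v)$ for any single point $v$, it suffices to exhibit, for each $d$, a $\Gm$-module $V$ and a point $v \in V \setminus \NGV$ with $\epsilon(\Gm, v) > d$.

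The key construction is to take $V = V_m$ to be the one-dimensional $\Gm$-module on which $t \in \kk^{*}$ acts by $t \cdot X = t^{m} X$, for a weight $m$ we will choose large. On $\kk[V_m] = \kk[x]$ the variable $x$ has weight $-m$ under the induced action, so $g \cdot x^{k} = t^{-mk} x^{k}$, and hence the only invariants are constants: $\kk[V_m]^{\Gm} = \kk$. This forces $\NGV = V_m$ entirely, which makes $V_m \setminus \NGV = \emptyset$ — so a single such weight space is useless on its own. The fix is to use a module with \emph{two} weights of opposite sign, say $V = \kk X \oplus \kk Y$ with $t \cdot X = t X$ and $t \cdot Y = t^{-1} Y$; then $xy$ is invariant, the nullcone is the union of the two coordinate axes, and a point $v$ off the axes is separated from zero by $xy$ in degree $2$. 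To make $\epsilon$ large rather than small, I instead take weights $m$ and $-1$ (or more cleanly $\{1, -m\}$): then the lowest-degree monomial invariant is $x^{m} y$ of degree $m+1$, so any invariant not vanishing at a generic point $v$ must have degree at least $m+1$. Letting $m \to \infty$ gives $\epsilon(\Gm, v) \to \infty$, and hence $\sigma(\Gm) = \infty$.

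The main obstacle, and the point requiring care, is verifying the lower bound $\epsilon(\Gm, v) \ge m+1$ rigorously: one must argue that \emph{every} homogeneous invariant of positive degree is a $\kk$-linear combination of monomials $x^{a} y^{b}$ with $a - mb = 0$ (the weight-zero condition from $t \cdot x = t^{-1}x$, $t \cdot y = t^{m} y$), and that the minimal such monomial with $a,b \ge 0$ not both zero is $x^{m} y$ of degree $m+1$. Because $\Gm$ is a torus, $\kk[V]$ decomposes as a direct sum of weight spaces and the invariants are exactly the weight-zero part, so this is a clean linear-algebra/lattice computation rather than a geometric one — the solution set of $a = mb$ in non-negative integers has minimal nonzero element $(m,1)$. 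The only subtlety is to confirm that a generic $v = (v_1, v_2)$ with both coordinates nonzero lies outside $\NGV$ (immediate, since $x^{m}y$ does not vanish there) and that no lower-degree invariant can rescue it; both follow directly from the weight decomposition. I therefore expect the proof to be short once the two-weight module is in hand, with the reduction via Lemma \ref{RelativeEpsilon} and the monotonicity of $\sigma$ doing the structural work.
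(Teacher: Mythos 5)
Your weight-lattice computation for $\Gm$ is correct and is, in fact, essentially the same mechanism the paper uses for its lower bound; the genuine gap is in your structural reduction. The inequality $\sigma(T)\le\sigma(G)$ for an arbitrary subgroup $T\le G$ is not available: Proposition \ref{sigmaGG0} only compares $G$ with its identity component $G^0$, and Lemma \ref{RelativeEpsilon} compares $\sigma(G,V)$ with $\tau(H,V)$ \emph{for one and the same $G$-module $V$}. Your witness module --- the two-dimensional $\Gm$-module with weights $1$ and $-m$ --- is merely a $T$-module, and in general it neither extends to a $G$-module nor occurs inside the restriction of one. For instance, if $G=\SL_2(\kk)$ and $T$ is its diagonal maximal torus, the weight set of any restriction of a $G$-module to $T$ is stable under the Weyl group, i.e.\ closed under negation, so the weight set $\{1,-m\}$ with $m>1$ never occurs. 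Hence proving $\sigma(\Gm)=\infty$ does not by itself prove $\sigma(G)=\infty$. A second, related defect: even after passing to a genuine $G$-module, your test point must lie outside the \emph{$G$-nullcone}, not just the $T$-nullcone (since $\kk[V]^G\subseteq\kk[V]^T$ we have $\mathcal{N}_{T,V}\subseteq\mathcal{N}_{G,V}$, so the condition is strictly stronger), and this requires exhibiting a $G$-invariant that does not vanish at the point --- something your plan never addresses.

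The paper closes both gaps with a single construction. Starting from a faithful $G$-module $V$ with a basis vector $v_0$ of nonzero $T$-weight $r$, it forms the $G$-module $U_m:=V\oplus S^m(V^*)$ and takes $v=v_0+y_0^m$. The function $f=\sum_{{\bf e}}{\bf y}^{{\bf e}}Z_{{\bf e}}$, corresponding to the identity map of $S^m(V^*)$, is a $G$-invariant of degree $m+1$ with $f(v)=1$, certifying $v\notin\mathcal{N}_{G,U_m}$; then precisely your kind of torus argument (a $T$-invariant monomial not vanishing at $v$ can involve only $y_0$ and $Z_{(m,0,\ldots,0)}$, and the weight equation $r(ml-k)=0$ forces degree $k+l\ge m+1$) yields $\epsilon(G,v)=m+1$ and hence $\sigma(G,U_m)\ge m+1$. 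Your plan becomes a proof only if you add such a step: for each $m$, an honest $G$-module together with a low-degree $G$-invariant witnessing that the test point is outside the $G$-nullcone, while all shorter $T$-invariants vanish at it.
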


\begin{proof}  
We exhibit a sequence of $G$-modules $\{U_m\mid m \in
  \mathbb{N}\}$ such that $\sigma(G,U_m) \ge m+1$ for all $m\in\N$. By assumption, $G$ contains a
  subgroup  $T \cong \kk^*$, so there is an isomorphism $\kk^{*}\rightarrow
  T$, $t\mapsto a_{t}$. As a linear algebraic group, $G$ can be
  considered as a closed subgroup of some $\GL_{n+1}(\kk)$, and then $V=\kk^{n+1}$
  becomes a faithful $G$-module. We
  can choose a basis $\{v_0,v_1,\ldots, v_n\}$ of $V$ on which $T$ acts
  diagonally, and as $T$ acts faithfully, it acts non-trivially on at least
  one basis vector, say $v_{0}$. Therefore, for some $r\in\Z\setminus\{0\}$,
  we have $a_{t}*v_{0}=t^{r}v_{0}$ for all $t\in\kk^{*}$. Write $\{y_0,y_1,\ldots, y_n\}$ for the basis of $V^*$ dual to
$\{v_0,v_1,\ldots, v_n\}$. A basis for $S^m(V^*)$ is then given by the set of
monomials 
\[
\left\{{\bf
  y}^{{\bf e}}:=\prod_{i=0}^{n} y_i^{e_i}\in S^m(V^*) \,\,\mid\,\, {\bf e}\in\N_{0}^{n+1},\,\, |{\bf
  e}|:=\sum_{i=0}^{n}e_{i}=m\right\}
\]
 of
degree $m$ in this basis. Let further \[
\{Z_{\bf e}\in S^{m}(V^{*})^{*}\mid
{\bf e}\in\N_{0}^{n+1},\,\,|{\bf e}|= m\}
\]
 denote the corresponding dual basis  of $S^m(V^*)^*$  i.e. $Z_{{\bf e}}(y^{\bf
  e'})=\delta_{{\bf e},{\bf e'}}$ (the Kronecker-delta).  Now we set $U_m:= V \oplus S^m(V^*)$.
We may identify $\kk[U_m]$ with 
\[
S(U_m^*) = S(V^* \oplus S^m(V^*)^*) = \kk[y_0,y_1,\ldots, y_n]\left[ Z_{\bf
    e}: {\bf e}\in\N_{0}^{n+1},\,\, |{\bf e}|=m\right].
\] 
Consider the point $v:= v_0+y_0^m \in U_m$. We claim that $v \not \in
\mathcal{N}_{G,U_m}$, and we will show that $\epsilon(G,v)=m+1$. As a
consequence, $\sigma(G,U_{m})\ge\epsilon(G,v)=m+1$, finishing the proof.
To see this, we define the polynomial 
\[
f:= \sum_{{\bf e}\in\N_0^{n+1},\,|{\bf e}|=m} {\bf y}^{\bf e}Z_{\bf e} \in
\kk[U_m],
\]
which can be interpreted as the identity map $\id: S^{m}(V^{*})\rightarrow
S^{m}(V^{*})$, and is hence an invariant, i.e. $f\in \kk[U_{m}]^{G}$. Note
that here we used the isomorphism
\[
\Hom_{\kk}(S^m(V^*),S^m(V^*))\cong S^m(V^*) \otimes S^m(V^*)^*
\]
and that 
\[
\kk[U_{m}]\cong S(V^* \oplus S^m(V^*)^*) \cong S(V^*) \otimes S(S^m(V^*)^*)
\]
contains a direct summand isomorphic to $S^m(V^*) \otimes S^m(V^*)^*$. Clearly
$f(v) = 1 \neq 0$, which shows that $v\not \in \mathcal{N}_{G,U_m}$.
Furthermore we have $\deg(f) = m+1$, so we have $\epsilon(G,v) \leq m+1$. It
remains to show that $\epsilon(G,v) \geq m+1$. Suppose a homogeneous $f' \in \kk[U_m]_{+}^G$
also satisfies $f'(v) \neq 0$; we will show that $\deg(f') \geq m+1$. Observe
that a fortiori we have $f' \in \kk[U_m]_{+}^T$. Therefore $f'$ can be written as
a sum of $T$ invariant monomials, so in particular there exists a
$T$-invariant monomial $h$ (of the same degree as $f'$) satisfying $h(v) \neq
0$. As $v=v_0+y_0^m$, the only variables that can appear in $h$ are those dual
to $v_{0}$ and $y_{0}^{m}$, i.e. the variables $y_{0}$ and $Z_{{\bf e}_{0}}$ with ${\bf
  e}_{0}:=(m,0,0,\ldots,0)$. We thus have $h=y_{0}^{k}Z_{{\bf e}_{0}}^{l}$ with
$k,l\in\N_{0}$, and $\deg(h)=k+l>0$. On the other hand, since $h \in \kk[U_m]^T$ we have 
\begin{eqnarray*}
y_{0}^{k}Z_{{\bf e}_{0}}^{l}&=&h = a_{t} * h = (a_{t} * y_0)^{k}(a_{t} * Z_{{\bf e}_{0}})^{l}\\& =& 
(t^{-r}y_{0})^{k}(t^{mr}  Z_{{\bf e}_{0}})^{l}=t^{mrl-kr}\cdot y_{0}^{k}Z_{{\bf
    e}_{0}}^{l} \quad \text{ for all }t\in\kk^{*},
\end{eqnarray*}
i.e. $r(ml-k)=0$. Since $r\ne 0$ and $k+l>0$ it must be the case that $k=ml\ge m$
and $l\ge 1$. Therefore $\deg(f') = \deg(h) = ml+l\ge m+1$ as required.
\end{proof}

\begin{Corollary} 
Suppose $G$ is a linear algebraic group such that $\sigma(G)$ is finite. Then
$G^{0}$ is unipotent, i.e. either $G$ is finite or $G^{0}$ is infinite unipotent.
\end{Corollary}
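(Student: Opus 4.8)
The plan is to prove this Corollary by analyzing the structure of $G^0$ via the standard structure theory of connected linear algebraic groups. The key observation is that we want to show $G^0$ is unipotent whenever $\sigma(G)$ is finite, which by Proposition~\ref{sigmaGG0} is equivalent to $\sigma(G^0)$ being finite. So I would first reduce to studying the connected group $G^0$: by Proposition~\ref{sigmaGG0}, $\sigma(G)<\infty$ forces $\sigma(G^0)<\infty$, so it suffices to show that a connected linear algebraic group with finite $\sigma$ must be unipotent.

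The main tool is the contrapositive combined with Proposition~\ref{SigmaSemisimple}: I would argue that if $G^0$ is \emph{not} unipotent, then $G^0$ contains a non-trivial torus, and hence $\sigma(G^0)=\infty$ by Proposition~\ref{SigmaSemisimple}, forcing $\sigma(G)=\infty$ by Proposition~\ref{sigmaGG0} (since $\sigma(G^0)\le\sigma(G)$). The crux is therefore the group-theoretic claim: \emph{a connected linear algebraic group that is not unipotent contains a non-trivial torus.} This follows from the structure theory of connected solvable and reductive groups. If $G^0$ were unipotent there is nothing to prove; otherwise, I would invoke the fact that any connected linear algebraic group which is not unipotent contains a non-trivial semisimple element, and the closure of the group it generates (or more precisely, a maximal torus of $G^0$) is non-trivial. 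Concretely, every connected linear algebraic group contains a maximal torus $T$, and $T$ is trivial if and only if $G^0$ is unipotent; this is a standard consequence of the theory (e.g.\ the Lie--Kolchin structure of the connected solvable radical together with the fact that a connected group consisting entirely of unipotent elements is unipotent).

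I expect the main obstacle to be marshalling the correct citation or the cleanest self-contained justification for the assertion that a non-unipotent connected linear algebraic group contains a non-trivial torus. The cleanest route is: let $B$ be a Borel subgroup of $G^0$; then $B=T\ltimes B_u$ with $T$ a maximal torus and $B_u$ the unipotent radical, and $G^0$ is unipotent precisely when $T$ is trivial. Since every element of $G^0$ is conjugate into $B$, if $G^0$ is not unipotent there is a non-semisimple decomposition forcing $T\ne\{1\}$, i.e.\ a non-trivial torus. Thus the entire argument compresses into a short chain:
\[
\sigma(G)<\infty \;\Longrightarrow\; \sigma(G^0)<\infty \;\Longrightarrow\; G^0 \text{ contains no non-trivial torus} \;\Longrightarrow\; G^0 \text{ is unipotent},
\]
where the middle implication is the contrapositive of Proposition~\ref{SigmaSemisimple} applied to $G^0$, and the last implication is the structure-theoretic fact about maximal tori. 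The final clause of the statement, that $G$ is then either finite or has infinite unipotent connected component, is immediate: $G^0$ unipotent and non-trivial means $G^0$ is infinite (unipotent groups over a field are connected and infinite unless trivial), while $G^0$ trivial means $G$ is finite.
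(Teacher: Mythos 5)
Your proposal is correct and takes essentially the same route as the paper: reduce to $G^0$ via Proposition~\ref{sigmaGG0}, apply the contrapositive of Proposition~\ref{SigmaSemisimple}, and use the structure-theoretic fact that a connected linear algebraic group of rank zero (no non-trivial torus) is unipotent, for which the paper simply cites \cite[Exercise 21.4.1]{Humphreys} where you sketch a Borel-subgroup justification. One harmless slip: your parenthetical that unipotent groups are ``connected and infinite unless trivial'' is false in positive characteristic (finite unipotent $p$-groups exist), but it is not needed, since $G^0$ is connected by definition and any non-trivial connected algebraic group is infinite.
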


\begin{proof} 
If
$\sigma(G)$ is finite, $\sigma(G^{0})$ is finite by Proposition \ref{sigmaGG0}. It follows from Proposition \ref{SigmaSemisimple} that
$G^{0}$ does not contain any non-trivial torus, i.e. the rank of the connected
group $G^{0}$ (the
dimension of a maximal torus) is zero, hence $G^{0}$ is unipotent by
\cite[Exercise 21.4.1]{Humphreys}.
\end{proof}

Specialising to the case of $\kk$ a field of characteristic zero, this
completes the proof of Theorem~\ref{SigmaChar0}. To finish the proof of
Theorem~\ref{SigmaCharP}, it remains to show that over a field of positive
characterstic, if $G^{0}$ is infinite unipotent, we have
$\sigma(G)=\infty$. This follows from $\sigma(G^{0})\le\sigma(G)$ (Proposition \ref{sigmaGG0}), the inequality $\delta(G^{0})\le\sigma(G^{0})$
and from $\delta(G^{0})=\infty$ (Corollary \ref{DeltaUnipotent}). The following proposition, which provides
some examples of their own interest, gives a more direct proof that
$\delta(\Ga)=\sigma(\Ga)=\infty$ for a field of positive characteristic.
Additionally, it gives another proof of $\beta_{\sep}(\Ga)=\infty$ for such a field,
which is also shown in \cite[Proposition 4]{KohlsKraft}, see also the
following remark for more details. As before, it follows
$\delta(G)=\sigma(G)=\infty$ for any infinite unipotent connected group, via  a normal subgroup $N$ such that $G/N\cong \Ga$.
We want to mention that $\Ga$-modules of the type as in the proposition are
also investigated in \cite{Fauntleroy,TanimotoPolynomiality}. The generators of the considered
invariant ring would also follow from the latter paper, but we give a 
self-contained argument. 

\begin{Prop}
Assume $\kk$ is a field of characteristic $p>0$, and let $V_{n}=\kk^{3}$
($n\ge 1$) be the
$\Ga=(\kk,+)$-module given by  the representation
\[
\Ga\mapsto \GL_{3}(\kk),\quad t\mapsto\left(\begin{array}{ccc}1&0&0\\-t&1&0\\-t^{p^{n}}&0&1\end{array}\right).
\]
If we write $\kk[V_{n}]=\kk[x_{0},x_{1},x_{2}]$, then we have
\[
\kk[V_{n}]^{\Ga}=\kk[x_{0},x_{2}x_{0}^{p^{n}-1}-x_{1}^{p^{n}}] \quad \text{
  and } \quad \delta(\Ga,V_{n})=\sigma(\Ga,V_{n})=p^{n}.
\]
Consequently, $\delta(\Ga)=\sigma(\Ga)=\infty$.
\end{Prop}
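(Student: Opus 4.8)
The plan is to compute the invariant ring $\kk[V_n]^{\Ga}$ explicitly and then read off $\delta$ and $\sigma$ from the degrees of its generators. First I would record the induced action on coordinate functions, namely $t\cdot x_0=x_0$, $t\cdot x_1=x_1+tx_0$ and $t\cdot x_2=x_2+t^{p^n}x_0$ (this is a genuine $\Ga$-action because $(t+s)^{p^n}=t^{p^n}+s^{p^n}$). That $x_0$ and $f:=x_2x_0^{p^n-1}-x_1^{p^n}$ are invariant is then a one-line Frobenius check: $t\cdot f=(x_2+t^{p^n}x_0)x_0^{p^n-1}-(x_1+tx_0)^{p^n}=f$, since $(x_1+tx_0)^{p^n}=x_1^{p^n}+t^{p^n}x_0^{p^n}$.

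To see that these two invariants generate, I would localise at $x_0$ and use a slice. In $\kk[V_n][x_0^{-1}]$ set $s:=x_1/x_0$, so that $\Ga$ acts by the translation $s\mapsto s+t$. Then $\kk[V_n][x_0^{-1}]=A[s]$ is a polynomial ring in $s$ over $A:=\kk[x_0^{\pm 1},w]$, where $w:=x_2-s^{p^n}x_0=f/x_0^{p^n-1}$ is $\Ga$-fixed (a transcendence-degree count shows $s$ is transcendental over $A$). Since $\Ga$ fixes $A$ and translates $s$, and since any $g(s)\in A[s]$ with $g(s+t)=g(s)$ for all $t$ in the infinite field $\kk$ must be constant (the identity $g(s+T)=g(s)$ then holds in $A[s,T]$ because $T$ has infinitely many roots, whence $g(T)=g(0)$), I obtain $\bigl(\kk[V_n][x_0^{-1}]\bigr)^{\Ga}=A=\kk[x_0^{\pm1},w]$. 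Intersecting with $\kk[V_n]$ gives $\kk[V_n]^{\Ga}=\kk[V_n]\cap\kk[x_0^{\pm1},w]$. For any invariant $h$, clearing denominators via $w=fx_0^{-(p^n-1)}$ shows $x_0^M h\in\kk[x_0,f]$ for some $M\ge 0$. To descend to $h\in\kk[x_0,f]$ I would induct on $M$: writing $x_0^M h=g(x_0,f)$ and reducing modulo $x_0$, the congruence $f\equiv -x_1^{p^n}\pmod{x_0}$ (valid as $p^n-1\ge 1$) together with linear independence of the monomials $x_1^{p^n j}$ forces the $x_0$-free part of $g$ to vanish; hence $x_0\mid g$ and one factor of $x_0$ cancels in the domain $\kk[V_n]$.

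With $\kk[V_n]^{\Ga}=\kk[x_0,f]$ established, the remainder is bookkeeping. The nullcone is $\NGV=\mathcal V(x_0,f)=\{v_0=v_1=0\}$ and the fixed space is $V_n^{\Ga}=\{v_0=0\}$. For the bound $\sigma(\Ga,V_n)\le p^n$: any $v\notin\NGV$ satisfies $x_0(v)\ne 0$ or $f(v)\ne 0$, and $x_0,f$ have degrees $1$ and $p^n$, so $\epsilon(\Ga,v)\le p^n$. For $\delta(\Ga,V_n)\ge p^n$: choose a fixed point $v$ with $v_0=0$, $v_1\ne 0$, so that $f(v)=-v_1^{p^n}\ne 0$ and hence $v\in V_n^{\Ga}\setminus\NGV$; every homogeneous invariant of degree $d$ with $1\le d<p^n$ is a scalar multiple of $x_0^d$ (because $f$ has degree $p^n$) and so vanishes at $v$, giving $\epsilon(\Ga,v)=p^n$. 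Combined with the general inequality $\delta(\Ga,V_n)\le\sigma(\Ga,V_n)$, this yields $\delta(\Ga,V_n)=\sigma(\Ga,V_n)=p^n$, and since this holds for every $n$ we conclude $\delta(\Ga)=\sigma(\Ga)=\infty$.

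The main obstacle is the descent step in the second paragraph: passing from ``$x_0^M h$ lies in $\kk[x_0,f]$'' back to ``$h$ does'', i.e.\ showing that $\kk[x_0,f]$ is saturated with respect to $x_0$ inside the invariant ring. Everything else—the Frobenius invariance check, the slice computation over the infinite field $\kk$, and the degree bookkeeping for $\delta$ and $\sigma$—is routine once the invariant ring is identified.
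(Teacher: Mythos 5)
Your proposal is correct, and a large part of it coincides with the paper's argument: the invariance of $x_0$ and $f=x_2x_0^{p^n-1}-x_1^{p^n}$, the identification $\mathcal{N}_{\Ga,V_n}=\{v_0=v_1=0\}$ and $V_n^{\Ga}=\{v_0=0\}$, the witness point $(0,1,0)$, the observation that homogeneous invariants of degree $d$ with $1\le d<p^n$ are scalar multiples of $x_0^d$, and the squeeze $p^n\le\delta(\Ga,V_n)\le\sigma(\Ga,V_n)\le p^n$ are all exactly as in the paper. Where you genuinely diverge is in proving $\kk[V_n]^{\Ga}=\kk[x_0,f]$. The paper also passes, in effect, to the slice $x_1=0$ over the locus $x_0\ne 0$: from $t*f=f$ it derives an identity in $\kk[V_n][t]$ (using, as you do, that $\kk$ is infinite -- guaranteed by the standing hypothesis that $\kk$ is algebraically closed), substitutes $t:=-x_1/x_0$, expands an invariant as $\sum_k a_k(x_0,x_1)x_2^k$, and then -- this is the step that replaces your whole descent -- substitutes $x_2:=0$ and compares with the polynomial $f(x_0,x_1,0)$, using linear independence of the powers $x_1^{p^nk}$ to conclude that each rational coefficient $b_k(x_0)/x_0^{(p^n-1)k}$ is already a polynomial. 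That single substitution accomplishes in one stroke what your induction on $M$ (the $x_0$-saturation of $\kk[x_0,f]$ inside $\kk[V_n]$) does; and note that the step you flag as the main obstacle is in fact complete as you wrote it, since reduction modulo the prime $(x_0)$, the congruence $f\equiv -x_1^{p^n}$, and linear independence of the $x_1^{p^nj}$ kill the $x_0$-free part of $g$, after which cancellation in the domain $\kk[V_n]$ runs the induction. What your route buys is a cleaner structural statement -- the full localized invariant ring $\bigl(\kk[V_n][x_0^{-1}]\bigr)^{\Ga}=\kk[x_0^{\pm1},w]$ via the local slice $s=x_1/x_0$, an argument that transfers verbatim to any $\Ga$-module with such a slice variable -- at the cost of the extra saturation lemma; the paper's computation is shorter and avoids the localization formalism entirely.
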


\begin{proof}
The action $*$ of $\Ga$ on $\kk[V_{n}]$ is given by
\[
t*f(x_{0},x_{1},x_{2})=f(x_{0},x_{1}+tx_{0},x_{2}+t^{p^{n}}x_{0}) \quad \text{
for } t\in\Ga,\,\,\, f(x_{0},x_{1},x_{2})\in\kk[V_{n}].
\]
If $f$ is an invariant, the equation $t*f=f$ for all $t\in\Ga$ implies that
for an additional independent variable $t$, the equation
\[
f(x_{0},x_{1},x_{2})=f(x_{0},x_{1}+tx_{0},x_{2}+t^{p^{n}}x_{0})
\]
holds in the polynomial ring $\kk[V_{n}][t]$. Substituting $t:=-\frac{x_{1}}{x_{0}}$ leads to
\begin{eqnarray}\label{fx0x1x2}
f(x_{0},x_{1},x_{2})=f\left(x_{0},0,x_{2}-\frac{x_{1}^{p^{n}}}{x_{0}^{p^{n}}}x_{0}\right)=f\left(x_{0},0,\frac{x_{2}x_{0}^{p^{n}-1}-x_{1}^{p^{n}}}{x_{0}^{p^{n-1}}}\right).
\end{eqnarray}
We have to show
that
$\kk[V_{n}]^{\Ga}\subseteq\kk[x_{0},x_{2}x_{0}^{p^{n}-1}-x_{1}^{p^{n}}]$, as
the reverse inclusion is checked immediately. For
an $f\in \kk[V_{n}]^{\Ga}$, write
$f=\sum_{k=0}^{m}a_{k}(x_{0},x_{1})x_{2}^{k}$ with polynomials
$a_{k}\in\kk[x_{0},x_{1}]$. Equation \eqref{fx0x1x2} implies
\begin{eqnarray}\label{feqSum}
f=\sum_{k=0}^{m}a_{k}(x_{0},0)\left(\frac{x_{2}x_{0}^{p^{n}-1}-x_{1}^{p^{n}}}{x_{0}^{p^{n}-1}}\right)^{k}=\sum_{k=0}^{m}\frac{b_{k}(x_{0})}{(x_{0}^{p^{n}-1})^{k}}(x_{2}x_{0}^{p^{n}-1}-x_{1}^{p^{n}})^{k},
\end{eqnarray}
with polynomials $b_{k}(x_{0}):=a_{k}(x_{0},0)\in\kk[x_{0}]$. Substituting $x_{2}:=0$ leads to
\[
f(x_{0},x_{1},0)=\sum_{k=0}^{m}\frac{b_{k}(x_{0})}{(x_{0}^{p^{n}-1})^{k}}(-x_{1}^{p^{n}})^{k}\in\kk[x_{0},x_{1}],
\]
which implies that $c_{k}(x_{0}):=\frac{b_{k}(x_{0})}{(x_{0}^{p^{n}-1})^{k}}$ is
actually a polynomial, i.e. an element of $\kk[x_{0}]$. Resubstituting in
\eqref{feqSum} implies
\[
f=\sum_{k=0}^{m}c_{k}(x_{0})(x_{2}x_{0}^{p^{n}-1}-x_{1}^{p^{n}})^{k}\in\kk[x_{0},x_{2}x_{0}^{p^{n}-1}-x_{1}^{p^{n}}],
\]
 as desired.  It follows that $\sigma(\Ga,V_{n})\le p^{n}$ and $\mathcal{N}_{\Ga,V_{n}}=\{(0,0,a_{2})\in
 V_{n}\mid a_{2}\in\kk\}$, and clearly we have $V_{n}^{\Ga}=\{(0,a_{1},a_{2})\in
 V_{n}\mid a_{1},a_{2}\in\kk\}$.
Now the point $v:=(0,1,0)\in V_{n}^{\Ga}\setminus\mathcal{N}_{\Ga,V_{n}} $ satisfies $x_{0}(v)=0$ and
 $(x_{2}x_{0}^{p^{n}-1}-x_{1}^{p^{n}})(v)=-1$, which shows $\delta(\Ga,V_{n})=\sigma(\Ga,V_{n})=p^{n}$.
\end{proof}

\begin{rem} 
Theorems \ref{SigmaCharP} and \ref{SigmaChar0}  were proved by
  ``elementary'' means, in the sense that we did not use any geometric
  invariant theory. We can use these results to give an elementary proof of
  \cite[Theorem~A]{KohlsKraft}, which states that $\beta_{\sep}(G)$ is finite
  if and only if $G$ is finite. That $\beta_{\sep}(G)$ is finite for a finite
  group $G$ is well known (see \cite[Corollary~3.9.14]{DerksenKemper}) so it
  remains to prove the converse. Suppose $\beta_{\sep}(G)$ is finite. The
  inequality $\sigma(G)\le\beta_{\sep}(G)$ implies in particular $\sigma(G)$
  is finite, so if $\kk$ has characteristic $p>0$ we are done by Theorem \ref{SigmaCharP}. Otherwise we
  conclude that $G^0$ is unipotent from Theorem \ref{SigmaChar0}. 
Now the results $\beta_{\sep}(\Ga)=\infty$ and $\beta_{\sep}(G^{0})\le
\beta_{\sep}(G)$, which are both proven elementarily in \cite[Proposition~5 and
Theorem B]{KohlsKraft}, imply that $\beta_{\sep}(G)=\infty$ when $G^{0}$ is an infinite unipotent group. Hence, if $\beta_{\sep}(G)<\infty$, $G^{0}$ and $G$ are finite.
\end{rem}

We do not know very much about $\sigma(G)$ when $G$ is an infinite unipotent
group over a field of characteristic zero. Unlike $\beta_{\sep}(G)$, it is not
always infinite, as the following surprising result shows:

\begin{Prop}
Assume $\kk$ is a field of characteristic $0$. Then $\sigma(\Ga)=2$.
\end{Prop}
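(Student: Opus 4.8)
The plan is to prove the two inequalities $\sigma(\Ga)\ge 2$ and $\sigma(\Ga)\le 2$ separately. For the lower bound I would exhibit a single module forcing degree $2$. Take $V=S^2(W)$, where $W=\langle X,Y\rangle$ is the standard two-dimensional module with $t*X=X$, $t*Y=Y+tX$; concretely $V$ has coordinates $x_0,x_1,x_2$ dual to $X^2,XY,Y^2$, and the induced locally nilpotent derivation $D$ satisfies $Dx_0=x_1$, $Dx_1=x_2$, $Dx_2=0$ (after rescaling the parameter). The only degree-one invariant is a scalar multiple of $x_2$, while $q:=x_1^2-2x_0x_2$ is a degree-two invariant, since $D(x_1^2-2x_0x_2)=2x_1x_2-2x_1x_2=0$. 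At the point $v$ with $x_2(v)=x_0(v)=0$ and $x_1(v)=1$ we have $q(v)=1\ne 0$, so $v\notin\mathcal{N}_{\Ga,V}$, yet every degree-one invariant vanishes at $v$. Hence $\epsilon(\Ga,v)=2$ and $\sigma(\Ga)\ge\sigma(\Ga,V)\ge 2$.

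For the upper bound I would argue structurally. In characteristic zero every rational $\Ga$-module is $t\mapsto\exp(tN)$ for a nilpotent $N$, hence decomposes into Jordan blocks $V\cong\bigoplus_i V_{n_i}$, where $V_n\cong\operatorname{Res}^{\SL_2(\kk)}_{\Ga}S^{n-1}(W)$. Fixing such an $\SL_2(\kk)$-structure provides a maximal torus $T\cong\Gm$ normalising $\Ga$, so $\kk[V]^{\Ga}$ is $T$-graded; crucially, since $\kk[V]^{\Ga}$ consists exactly of the highest-weight vectors for $\SL_2(\kk)$, every $T$-homogeneous invariant has $T$-weight $\ge 0$. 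On the $i$-th block I choose coordinates $x^{(i)}_0,\dots,x^{(i)}_{n_i-1}$ so that $Dx^{(i)}_j=x^{(i)}_{j+1}$ and the basis vector dual to $x^{(i)}_j$ has $T$-weight $(n_i-1)-2j$, and I write down the explicit degree-two invariants
\[
P^{(i)}_l:=\sum_{t=0}^{l}(-1)^t\,x^{(i)}_{n_i-1-l-t}\,x^{(i)}_{n_i-1-l+t}\qquad\bigl(0\le l\le\tfrac{n_i-1}{2}\bigr),
\]
which are killed by $D$ by a one-line telescoping computation and whose unique pure-square term is $x^{(i)}_{n_i-1-l}{}^{2}$.

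Given $v\notin\mathcal{N}_{\Ga,V}$ I would then run a trichotomy. If some degree-one invariant $x^{(i)}_{n_i-1}$ is nonzero at $v$, we are done in degree $1$. Otherwise let $k_i$ be the largest index with $x^{(i)}_{k_i}(v)\ne 0$ in each nonzero block. If some $k_i\ge\lceil(n_i-1)/2\rceil$, put $l:=n_i-1-k_i\ge 1$; then $P^{(i)}_l$ is defined and every non-square term of $P^{(i)}_l$ carries a factor $x^{(i)}_{k_i+t}$ with $t\ge 1$, which vanishes at $v$, so $P^{(i)}_l(v)=x^{(i)}_{k_i}(v)^2\ne 0$ gives a degree-two invariant. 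The remaining case is $k_i<\lceil(n_i-1)/2\rceil$ for every nonzero block, i.e. $v$ is supported entirely on basis vectors of strictly positive $T$-weight; I would show that this forces $v\in\mathcal{N}_{\Ga,V}$, contradicting the hypothesis. Indeed any $T$-homogeneous $\phi\in\kk[V]^{\Ga}_+$ has weight $\ge 0$, whereas every monomial not vanishing at $v$ is a nonempty product of coordinate functions of strictly negative $T$-weight, hence of strictly negative total weight; so no monomial of $\phi$ survives at $v$, and since every invariant is a sum of $T$-homogeneous ones we get $\phi(v)=0$ for all $\phi$.

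The one genuinely delicate point — and the main obstacle — is this last step. It is tempting to observe that $v$ being supported on positive $T$-weights makes $\lim_{\lambda\to 0}\lambda\cdot v=0$ under the torus, but this only places $v$ in the \emph{larger} $\SL_2(\kk)$-nullcone, whereas we need membership in the strictly smaller $\Ga$-nullcone. The resolution is to avoid limits and argue directly with weights: the inequality ``invariant weight $\ge 0$ versus surviving-monomial weight $<0$'' uses precisely that $\Ga$-invariants are highest-weight vectors, which is exactly what bridges the gap between the two nullcones. Combining the three cases yields $\sigma(\Ga,V)\le 2$ for every $V$, hence $\sigma(\Ga)\le 2$, and together with the lower bound $\sigma(\Ga)=2$.
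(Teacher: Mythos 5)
Your overall route is genuinely different from the paper's, which settles the upper bound in one line by citing the explicit degree-$\le 2$ invariants cutting out the nullcone of an arbitrary $\Ga$-module constructed in the reference \cite{ElmerKohls}; you instead rebuild such a set from scratch. Most of your architecture is sound: in characteristic zero every rational $\Ga$-module is indeed $t\mapsto\exp(tN)$ and splits into Jordan blocks extending to $\SL_2(\kk)$; the torus $T$ normalizes $\Ga$, so $\kk[V]^{\Ga}$ is $T$-stable and splits into weight spaces of nonnegative weight (highest-weight theory); and in your residual case every monomial surviving at $v$ has strictly negative total weight, forcing every positive-degree invariant to vanish at $v$, i.e.\ $v\in\mathcal{N}_{\Ga,V}$ -- your worry about the gap between the $\SL_2$- and $\Ga$-nullcones is correctly resolved by this weight comparison, with no limit argument needed. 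The lower bound via $V_3$, the point $(0,1,0)$ and $q=x_1^2-2x_0x_2$ is also correct.

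There is, however, one concrete error: the displayed quadratics $P^{(i)}_l$ are \emph{not} invariant as written. The telescoping fails at the square term, since with $m:=n_i-1-l$ one has $D\bigl(x_m^2\bigr)=2x_mx_{m+1}$ while only one copy of $x_mx_{m+1}$ is cancelled by the $t=1$ term; the computation gives $DP^{(i)}_l=x^{(i)}_{n_i-1-l}x^{(i)}_{n_i-l}\neq 0$ for $l\ge 1$ (already for $n_i=3$, $l=1$: $D(x_1^2-x_0x_2)=x_1x_2$, whereas your own lower-bound invariant $x_1^2-2x_0x_2$ carries the correct doubled coefficient). The repair is standard: set
\[
P^{(i)}_l:=\bigl(x^{(i)}_{m}\bigr)^2+2\sum_{t=1}^{l}(-1)^t\,x^{(i)}_{m-t}\,x^{(i)}_{m+t},\qquad m=n_i-1-l,
\]
for which the same telescoping now yields $DP^{(i)}_l=\pm 2\,x^{(i)}_{m-l}x^{(i)}_{m+l+1}=0$, the last factor being absent because $m+l=n_i-1$ is the top index and $Dx^{(i)}_{n_i-1}=0$. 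This correction changes nothing downstream: the unique pure-square term is still $\bigl(x^{(i)}_{k_i}\bigr)^2$, every other term carries a factor $x^{(i)}_{k_i+t}$ with $t\ge 1$ vanishing at $v$, so $P^{(i)}_l(v)=x^{(i)}_{k_i}(v)^2\neq 0$, and the $T$-weight of the corrected $P^{(i)}_l$ is $2(n_i-1-2l)\ge 0$, consistent with your positivity claim. With this single fix your trichotomy is complete and your self-contained proof of $\sigma(\Ga)=2$ goes through.
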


\begin{proof}
In \cite[Section 3]{ElmerKohls}, we give for any $\Ga$-module $V$ an
explicit set of invariants of degree at most $2$ that cuts out the nullcone. It follows that $\sigma(\Ga) = 2$.
\end{proof}

We conclude with an example which shows that $\sigma(\mathbb{G}_a \times \mathbb{G}_a) \geq 3$.

\begin{eg}
Let $\kk$ be an algebraically closed field of characteristic zero and
$V=\kk^4$. Consider an action of $G:=\mathbb{G}_a \times \mathbb{G}_a$ defined
as follows: $(s,t) \in \kk \times \kk$ acts on $V$ as multiplication by the matrix
\[ \left( \begin{array}{cccc}
1 & 0 & 0 & 0 \\
-s & 1 & 0 & 0 \\
\frac12s^2 - t & -s & 1 & 0 \\
-\frac16s^3+st & \frac12s^2-t & -s & 1
\end{array} \right).\]

Let $\{x_0,x_1,x_2,x_3\}$ denote the basis of $V^*$ dual to the standard basis of $V$. Then we claim that the ring of invariants $\kk[V]^G$ is generated by the invariants $x_0$ and $f:= x_1^3-3x_0x_1x_2+3x_0^2x_3$. Under this assumption we have that the point $v = (0,1,0,0) \in V$ is not contained in the nullcone, since $f(v)=1 \neq 0$, and is not separated from zero by any invariant of degree less than 3, which shows that $\sigma(G,V)=3$ and hence $\sigma(G) \geq 3$.

To prove the claim, consider the subgroup $H:=\{(0,t)\in G \mid t \in \kk\}$ of
$G$. The action of $H$ on $\kk[V]$ is given by 
\begin{eqnarray*}
(0,t)*x_0 &=& x_0\\ 
(0,t)*x_1 &=& x_1\\
(0,t)*x_2 &=& x_2+tx_0\\
(0,t)*x_3 &=& x_3+tx_1\quad\quad\text{ for all } t\in\kk.
\end{eqnarray*}
This $\Ga$-action corresponds to the direct sum of two copies of the natural representation of
$\Ga$, and the invariant ring is well known to be given by $\kk[V]^H =
\kk[x_0,x_1,x_0x_3-x_2x_1]$. Crucially, this is a polynomial ring in three
variables. Now $\kk[V]^G = \kk[x_0,x_1,x_0x_3-x_2x_1]^{G/H}$ is isomorphic to
the ring of invariants of a non-linear action of $\mathbb{G}_a$ on a
polynomial ring in three variables; by a theorem of Miyanishi (see
\cite[Theorem~5.1]{FreudenburgBook}) this ring of invariants is again
polynomial, with two generators. Therefore, $\kvg$ is a graded polynomial ring
with two generators. One may readily check that $x_0$ is the only
invariant of degree one, and as $f$ is an invariant of smallest possible
degree not contained in $\kk[x_{0}]$, we see that $\kk[V]^G=\kk[x_0,f]$ as claimed.
\end{eg}

\begin{ack}
This paper was prepared during visits of the second author to the University of Aberdeen, and of the first author to T.U. M\"unchen. The first of these visits was supported by the Edinburgh Mathematical Society's Research Support Fund. We want to thank Gregor Kemper and the Edinburgh Mathematical Society for making these visits possible. 
\end{ack}

\bibliographystyle{plain}
\bibliography{MyBib}

\end{document}